\documentclass[a4paper,reqno,10pt]{amsart}
\usepackage{amsthm,amssymb,amscd,amsfonts,amsmath,hyperref}
\newtheorem{theo}{Theorem}
\newtheorem{prop}{Proposition}
\newtheorem{lem}{Lemma}
\newtheorem{defi}{Definition}
\newtheorem{rem}{Remark}
\newtheorem{coro}{Corollary}

\numberwithin{equation}{section}
\begin{document}
\title{On Higgs bundles on Elliptic surfaces}
\author{Rohith Varma}
\address{Chennai Mathematical Institute, Plot H1, SIPCOT IT Park,
Siruseri, Kelambakkam, 603103, India.}
\email{rvarma@cmi.ac.in.}
\maketitle
\begin{abstract}
	Let $\pi : X \rightarrow C$ be a relatively minimal non-isotrivial elliptic surface over
        the field of complex numbers, where $g(C) \geq 2$. In this article,
	we demonstrate an equivalence between the category of semistable parabolic Higgs bundles on $C$, 
	and the category of semistable Higgs bundles on $X$ with vanishing second Chern class, and 
	determinant a vertical divisor.
\end{abstract}
\tableofcontents{}
\section{Introduction}
\subsection*{Motivation and statement of results}
Consider a relatively minimal elliptic surface 
$\pi : X \rightarrow C$ over $\mathbb{C}$,
with $\chi(X) > 0$. 
Let $c_1,\ldots,c_n$ be the set of points
on $C$ where the fibration $\pi$ has a multiple fiber of multiplicity
$m_i$ respectively. The 
data $(C,\textbf{c},\textbf{m}) := (C,c_1,\ldots,c_n,m_1,\ldots,m_n)$
defines a $2$-orbifold. It is well-known then that we have
a natural isomorphism of groups induced by $\pi$ (see \cite[Theorem 24, p~189]{fr98})
\[
\pi_1(X,*) \cong \pi_1^{orb}(C,*) \label{eq:fund} \tag{**}
\]
The orbifold fundamental group $\pi_1^{orb}(C,*)$ is defined as follows:
Recall the fundamental group $\pi_1(C - \{c_1,\ldots,c_n\},*)$
has $2g + n$ generators \\
$\alpha_1,\beta_1,\ldots,\alpha_g,\beta_g,\gamma_1,\ldots,\gamma_n$
subject to the relation 
\[[\alpha_1,\beta_1]\cdots [\alpha_g,\beta_g]\gamma_1 \cdots \gamma_n = 1.\] The orbifold
fundamental group $\pi_1^{orb}(C,*)$ is then defined to be
the quotient of $\pi_1(C - \{c_1,\ldots,c_n\},*)$ by the smallest
normal subgroup containing the elements $\gamma_i^{m_i}$.
Thus, $\pi_1^{orb}(C,*)$ is freely generated by the elements
$\alpha_1,\beta_1,\ldots,\alpha_g,\beta_g,\gamma_1,\ldots,\gamma_n$
subject to the relations 
\[
 [\alpha_1,\beta_1]\cdots[\alpha_g,\beta_g]\gamma_1\cdots \gamma_n = 1,
\,\ and \,\ \gamma_i^{m_i} = 1.\] \\
A natural class of elliptic surfaces with positive Euler characteristic
are the so called non-isotrivial elliptic fibrations. An elliptic surface
$\pi : X \rightarrow C$ is called isotrivial, if after passing to a 
finite cover $B \rightarrow C$, the surface $X \times_C B$ is birational
to a product $B \times E$, where $E$ is a complex elliptic curve.
An elliptic surface is called non-isotrivial if it is not isotrivial.\\ 

Now the space of Jordan equivalence classes of representations  
of $\pi_1(X,*)$ in $GL(n,\mathbb{C})$ can be identified with
the moduli space of $S$-equivalence classes of semistable
rank $n$ Higgs bundles on $X$, with vanishing Chern classes
from the work of Simpson (see \cite{simpI},\cite{simpII}).\\ 
Similarly, the space of Jordan
equivalence classes of representations of $\pi_1^{orb}(C,*)$,
correspond to $S$-equivalence classes of parabolic rank $n$
Higgs bundles on $C$. Here, by Jordan equivalence we mean 
the equivalence relation on the space of representations
given by identifying representations which have isomorphic
Jordan-Holder filtrations. The isomorphism \eqref{eq:fund} suggests
a natural correspondence between these moduli spaces.
If we restrict our representations to unitary representations,
then the corresponding
moduli spaces are that of $S$-equivalence classes of semistable
vector bundles on $X$ with vanishing Chern classes, and $S$-equivalence
classes of parabolic vector bundles on $C$ with parabolic degree $0$
(see \cite{chechu1,chechu2,chechu3,donaldson}).\\
An algebraic geometric isomorphism between these moduli spaces
was exhibited by Stefan Bauer in \cite{ba91} (see \cite{biswas1,biswas2,gantz} 
for related questions).\\
In this paper, our aim is to 
establish
a similar correspondence in the case of Higgs bundles on non-isotrivial
relatively minimal elliptic surfaces $\pi : X \rightarrow C$ with
$g(C) \geq 2$.\\
Recall, a Higgs bundle on a variety $Y$,
is a pair $(V,\theta)$ where $V$ is a vector bundle on $Y$ and 
$\theta \in Hom(V,V\otimes \Omega^1_Y)$, which satisfies
\[
\theta \wedge \theta = 0.
\]
Coming to our situation, fix a polarization $H$ on $X$ and consider the following categories:\\
$\mathcal{C}^{vHiggs}_X := \,\ $ The category of $H$-semistable Higgs 
bundles $(V,\theta)$ on X
with vanishing second Chern class and $det(V)$ a vertical divisor.\\
$\mathcal{C}^{ParHiggs}_{(C,\textbf{c},\textbf{m})} := \,\ $ The category of 
semistable parabolic Higgs bundles
on $C$ with parabolic structures above the points $c_i$ (with weights at $c_i$ belonging to $\frac {1}{m_i} \mathbb{Z} \cap [0,1)$).\\
Our main result is the following
\newtheorem*{thm:main}{Theorem \ref{thm:main}}
\begin{thm:main}
There is a natural equivalence of categories $\mathcal{C}^{vHiggs}_X$
and $\mathcal{C}^{ParHiggs}_{(C,\textbf{c},\textbf{m})}$.
\end{thm:main}

\subsection*{Strategy}
Our strategy, is the outcome of an attempt at adapting the 
study in \cite{ba91}, to the situation of Higgs bundles.\\ 
The category of
parabolic bundles (Higgs) on a curve $C$, with genus atleast $2$
is equivalent to the category of bundles (Higgs) on ramified
Galois covers, equivariant for the action of the Galois group.
Keeping this in mind, we consider an elliptic fibration
$\tilde{\pi} : \widetilde{X} \rightarrow \widetilde{C}$
with natural morphisms 
$q: \widetilde{X} \rightarrow X$ and 
$p: \widetilde{C}\rightarrow C$, such that
\[ \pi \circ q = p \circ \tilde{\pi}.
\]
We have further, $q : \widetilde{X} \rightarrow X$ is etale Galois.
We also have $p: \widetilde{C} \rightarrow C$ is Galois 
with the same Galois group as that of $q$.\\
Moreover, the fibration $\tilde{\pi}: \widetilde{X} \rightarrow \widetilde{C}$
is a non-isotrivial, relatively minimal elliptic surface with no
multiple fibers. We show that to prove Theorem \ref{thm:main}
it is enough to construct an equivalence of categories
of semistable equivariant (for the Galois group) Higgs bundles
on $\widetilde{X}$ and equivariant Higgs bundles on $\widetilde{C}$.\\ 
Subsequently, we argue that the equivariant situation as above can
be derived from Theorem~\ref{thm:main} applied to the fibration
$\tilde{\pi} :\widetilde{X} \rightarrow \widetilde{C}$.\\
So we are reduced to
proving Theorem \ref{thm:main} in the case of fibrations with
no multiple fibers.\\

Now in the case when the fibration has no multiple fibers, 
we show that every 
semi-stable Higgs bundle
$(V,\theta)$ on $X$ with vanishing second Chern class and
determinant vertical is the pull back of a semistable Higgs bundle
on $C$.
To that end, we first observe that it is enough to show that the bundle $V$ is the pull-back
of a bundle from the curve $C$. To see this let $W$
be a vector bundle on $C$ and consider the bundle $U := \pi^{*}(W)$
on $X$. We 
use Lemma \ref{lem:forms} and projection formula to
conclude
\[
H^{0}(X,\mathcal{E}nd(U,U)\otimes \Omega^1_X)) = 
H^{0}(C,\mathcal{E}nd(W,W)\otimes K_C).
\]
Hence, every Higgs field on the bundle $\pi^{*}(W)$ is the pull-back
of a Higgs field on $W$. \\
To show $V$ is the pull-back of a bundle on $C$,
it is enough to show its restriction to every fiber is trivial.
We reduce this further to showing that, the restriction of $V$ to
the generic fiber of $\pi$ is trivial. The generic fiber (possibly
after a base extension) of $\pi$ is an elliptic curve. Now we 
can use the beautiful classification results on vector bundles
over elliptic curves due to Atiyah (see \cite{atiyah}), to
study the generic restriction of $V$. This theme of understanding
the global picture by studying the generic fiber is something
which we use repeatedly in this article.

\subsection*{Related work and further comments}
The assumption $g(C) \geq 2$ has been used only to ensure the existence
of Galois covers with prescribed ramification points and ramification indices.
Hence, the results of section \ref{sec:2} and subsection \ref{subs:3} are 
valid without this assumption.
In particular, our proofs are valid in the case of a fibration with 
no multiple fibers without any assumption on the genus of $C$.\\

In \cite[section 5]{meta}, there is a discussion on the correspondence 
relating semistable Higgs bundles on elliptic surfaces with 
vanishing Chern classes and semistable
parabolic Higgs bundles (parabolic degree 0) on the curve. But the discussion 
is limited to
the case when the vector bundle underlying the Higgs bundle has a two step
Harder-Narasimhan filtration.\\ 
In \cite[Theorem 14.5]{garcia}, the authors 
consider the $1-1$ correspondence between the moduli space of 
semistable Higgs bundles
on $X$ with vanishing Chern classes and the moduli space of 
semistable parabolic Higgs bundles on the curve $C$ of parabolic degree $0$, 
arising as a consequence of the work of Simpson\cite{simpI,simpII}.
This correspondence is at the level of topological spaces.
The question of establishing an algebraic geometric correspondence
between these moduli spaces is proposed in \cite{garcia}, as a 
remark following the above mentioned theorem. We
are able to show such an isomorphism (see corollary \ref{coro:main})
in the case of non-isotrivial elliptic surfaces.\\

Further, as in \cite{ba91}
our study is not restricted to the situation of vanishing Chern classes.
We feel that results of this article must be true for isotrivial
elliptic surfaces with positive Euler characteristic as well, but 
we do not know how to prove it.

\subsection*{Acknowledgement}
The author wishes to thank his adviser Dr Vikraman Balaji for
his constant support and encouragement. He also would like
to thank Dr CS Seshadri for taking interest in this work.

\section {Preliminaries}
\label{sec:2}
All varieties considered are over the field of complex numbers unless mentioned
otherwise.
\subsection{Elliptic surfaces}
\label{subs:elliptic}
An Elliptic surface  is a fibered surface \\
$\pi : X \rightarrow C$
where the general fibers are genus $1$ curves and $X$, $C$ are 
a smooth projective surface and a smooth projective curve over $\mathbb{C}$
respectively. 
We call an elliptic surface as above relatively minimal
if there are no exceptional curves (curves with self intersection
number $-1$) on the fibers.\\
Just to be consistent with the definition of vertical
bundles (Definition~\ref{defi:vertical}) defined in the next subsection, we call a divisor 
$D$ \textit{vertical}, if $D$ is linearly equivalent to a divisor of the form 
$\Sigma_i r_i F_i$ where $r_i \in \mathbb{Q}$
and $F_i$ for every $i$ is a divisor corresponding
to a fiber at some point of $C$. We call a 
divisor $D$ \textit{vertically supported}
if $Supp(D)$ maps to a proper closed
subset of $C$ under $\pi$.
A \textit{vertically supported} divisor $D$ 
always satisfies $D^2 \leq 0$ and 
is
\textit{vertical} precisely when $D^2 = 0$.
For two divisors $D_1$ and $D_2$, we write $D_1 \equiv D_2$
if $D_1$ is numerically equivalent to $D_2$. 
The vertical divisors corresponding to various fibers
are all numerically equivalent. Hence as far as 
intersection theory is concerned, we may
work with a fixed fiber, say
we denote by $F$. The sheaf $R^1\pi_*(\mathcal{O}_X)$
is a line bundle on $C$ and we have in the case of 
relatively minimal elliptic surfaces
\[
\chi(X) = 12deg(L)
\]
where $L$ denotes the dual of the line bundle $R^1\pi_*(\mathcal{O}_X)$ on $C$.
We have the canonical bundle formula due to Kodaira (see \cite [Theorem 15,~p 176]{fr98})
\[
K_X \cong \pi^*(K_C \otimes L) \otimes \mathcal{O}_X(\Sigma_i(m_i-1)F_i).
\]
where $F_i$ are effective divisors on $X$ whose G.C.D of the coefficients
of the components are $1$ and the multiple fibers of $\pi$ are precisely 
of the form $m_iF_i$.\\
Hence, on a relatively minimal elliptic surface, we have
the canonical divisors of $X$ are vertical divisors.
If $Y \rightarrow C$ is an elliptic surface 
which is not relatively minimal, then assume after
blowing down the exceptional curves $\{E_1,\ldots,E_r\}$
we get a relatively minimal model say $X$. We then have
\[
K_Y \cong K_X \otimes \mathcal{O}_Y(E_1 + \ldots + E_r).
\]
Hence, $K_Y$ can be represented by vertically supported divisor. 
In particular, we have for any elliptic surface $X \rightarrow C$,
\[
K_X.F = 0.
\]
We will need the following
characterization of vertical divisors in the 
subsequent sections
\begin{lem}
Assume $\chi(X) > 0$. Then for a divisor $D$,
\[
D \,\ \text{vertical} \iff D.F = 0\,\ and\,\  D^2 = 0
\]
\label{lemm: vertical divisor}
\end{lem}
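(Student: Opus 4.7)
\medskip

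\noindent\textbf{Proof proposal.} The forward implication is straightforward. Since all fibers are numerically equivalent to the fixed fiber class $F$, if $D \sim \sum_i r_i F_i$ with $r_i \in \mathbb{Q}$ and each $F_i$ a whole fiber, then $D \equiv \left(\sum_i r_i\right) F$. Because $F^2 = 0$ (as $F$ moves in a pencil on $C$), both $D \cdot F$ and $D^2$ vanish automatically.

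For the reverse implication the plan is to reduce to the Hodge index theorem. Fix any ample divisor $H$ on $X$; since $F$ is effective and $H$ is ample we have $F \cdot H > 0$. Set
\[
\alpha := \frac{D \cdot H}{F \cdot H} \in \mathbb{Q}, \qquad D' := D - \alpha F.
\]
By construction $D' \cdot H = 0$, and using the hypotheses $D \cdot F = 0$, $D^2 = 0$ together with $F^2 = 0$ one computes $(D')^2 = D^2 - 2\alpha(D \cdot F) + \alpha^2 F^2 = 0$. The Hodge index theorem then asserts that any class orthogonal to an ample class has non-positive self-intersection, with equality only if the class is numerically trivial. Applied to $D'$ this forces $D' \equiv 0$, i.e.\ $D \equiv \alpha F$. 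Since $\alpha \in \mathbb{Q}$ and $F$ is a fiber, $D$ is numerically equivalent to a rational multiple of a fiber and hence vertical in the sense of the paper.

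The only delicate point is the appeal to the Hodge index theorem in the border case where the pair $(D',H)$ both lies on the light cone versus in the interior; this is handled by the standard formulation that on a smooth projective surface the intersection form restricted to the orthogonal complement of an ample class is negative definite modulo numerical equivalence. I expect no use of $\chi(X) > 0$ in this particular argument beyond ensuring we are genuinely on an elliptic surface where the fiber class $F$ is non-trivial; the hypothesis appears only as a global standing assumption for the section. The Hodge-index step is the sole substantive input — everything else is bookkeeping with intersection numbers.
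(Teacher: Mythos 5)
Your forward implication and your Hodge--index step reproduce the first half of the paper's argument, and that part is correct: from $D\cdot F=0$ and $D^2=0$ one does get $D\equiv \alpha F$ with $\alpha=(D\cdot H)/(F\cdot H)\in\mathbb{Q}$. The gap is in your final sentence. In this paper ``vertical'' is not a numerical condition: it means $D$ is (rationally) \emph{linearly} equivalent to a combination $\sum_i r_iF_i$ of fibers. The Hodge index theorem only pins down the numerical class of $D$, and the jump from ``$D\equiv\alpha F$'' to ``$D$ is vertical'' is precisely the nontrivial content of the lemma; your proof asserts it rather than proves it. That this jump is not formal can be seen on a product $C\times E$ (an elliptic surface with $\chi=0$): the divisor $D=C\times\{e_1\}-C\times\{e_2\}$ satisfies $D\cdot F=D^2=0$ and is numerically equivalent to a multiple of the fiber, yet it restricts to a nontrivial degree-zero line bundle on every fiber, so it is not linearly equivalent to any combination of fibers.

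The missing half, and the place where the hypothesis $\chi(X)>0$ does real work (contrary to your closing remark that it is not needed), is an effectivity argument. Set $D_l=D+lF$ and choose $l\gg 0$ with $D_l\cdot H>K_X\cdot H$; then $H^{2}(X,\mathcal{O}_X(D_l))\cong H^{0}(X,\mathcal{O}_X(K_X-D_l))^{*}=0$ by Serre duality, and since $D_l^2=0$ and $D_l\cdot K_X=0$ (the canonical class being vertical), Riemann--Roch gives $h^{0}(X,\mathcal{O}_X(D_l))\geq \chi(\mathcal{O}_X)>0$, the positivity of $\chi(\mathcal{O}_X)$ being exactly the hypothesis $\chi(X)>0$. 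Hence $D_l$ is effective; because $D_l\cdot F=0$, each of its components meets $F$ trivially and therefore lies in a fiber, so $D_l$ is vertically supported; and a vertically supported divisor with self-intersection $0$ is vertical (Zariski's lemma on the intersection form of fiber components, recalled in the paper just before the lemma). Finally $D=D_l-lF$ is vertical. Your computation can be kept as the first step, but without this effectivity argument the proof is incomplete.
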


\begin{proof}
$D$ vertical clearly implies 
\[D.F = 0 = D^2.\]
Conversely, assume $D.F = 0$ and $D^2 = 0$.
Let $H$ be an ample divisor on $X$.
Choose a pair of integers $m,n$ such that 
$(mD + nF).H = 0$. Since now $(mD + nF)^2 = 0$,
we get from Hodge index theorem on surfaces
that $mD + nF \equiv 0$ and $D \equiv rF$
where $r \in \mathbb{Q}$.
If $D$ is vertically supported
 then $D = aF$, with
$a \in \mathbb{Q}$. 
This is the case if $D$ is effective.
Hence, 
to conclude the proof
it is enough to show $D_l = D + lF$ is effective
where $l \in \mathbb{N}$ as $D_l$ satisfies
the hypothesis of the Lemma and the preceding discussion
applied to $D_l$ says $D_l$ is vertical and hence
so do $D = D_l - lF$.
To see this choose $l>>0$,
so that $(D_l).H > (K_X).H$. Then
\[
H^{2}(X,\mathcal{O}_X(D_l)) = H^{0}(X, Hom(D_l,K_X))^{*} = 0.
\]
Applying Riemann-Roch theorem, we see that 
\[
H^{0}(X,\mathcal{O}_X(D_l)) = H^{1}(X,\mathcal{O}_X(D_l)) + \chi(\mathcal{O}_X) > 0 
\]
and hence we have $D_l$ is effective.
\end{proof}

The following Lemma is necessary for our study
\begin{lem}(see \cite{121880})
Let $X$ be a non-isotrivial elliptic surface with no multiple fibers. Then
the natural map 
\[
K_C \rightarrow \pi_*(\Omega^1_X)
\]
is an isomorphism.\label{lem:forms}
\end{lem}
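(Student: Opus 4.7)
The plan is to push the sequence of Kähler differentials forward along $\pi$ and use non-isotriviality to control the resulting cokernel. Let $Q$ denote the cokernel of the natural map $\pi^*\Omega^1_C \to \Omega^1_X$. This map is injective (it is non-zero at the generic point of $X$, where $\pi$ is smooth, and its source is a line bundle, hence torsion-free), so we obtain a short exact sequence
\[
0 \to \pi^*\Omega^1_C \to \Omega^1_X \to Q \to 0;
\]
over the smooth locus of $\pi$, $Q$ coincides with $\Omega^1_{X/C}$. Applying $\pi_*$ and using the projection formula together with $\pi_*\mathcal{O}_X = \mathcal{O}_C$ (fibers are connected) gives
\[
0 \to K_C \to \pi_*\Omega^1_X \to \pi_*Q,
\]
in which the first arrow is precisely the natural map of the lemma. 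It therefore suffices to show that the second arrow is zero.

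For this I would analyse the restriction of the cotangent sequence to a smooth fiber $F = F_c$ of $\pi$. Since $F$ is an elliptic curve and $\Omega^1_F \cong \mathcal{O}_F$, the restricted sequence becomes
\[
0 \to \mathcal{O}_F \to \Omega^1_X|_F \to \mathcal{O}_F \to 0,
\]
whose extension class in $\mathrm{Ext}^1(\mathcal{O}_F, \mathcal{O}_F) = H^1(F, \mathcal{O}_F)$ is (up to a unit) the Kodaira--Spencer class of $\pi$ at $c$. By non-isotriviality, the $j$-invariant of the family is non-constant, so the Kodaira--Spencer map is non-zero on a dense open $V \subset C$. For $c \in V$ the long exact sequence in cohomology exhibits the connecting map $H^0(F, \mathcal{O}_F) \to H^1(F, \mathcal{O}_F)$ as an isomorphism of one-dimensional spaces, which forces the induced map $H^0(F, \Omega^1_X|_F) \to H^0(F, \mathcal{O}_F) = H^0(F, Q|_F)$ to be zero.

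Cohomology and base change then yields vanishing of the sheaf map $\pi_*\Omega^1_X \to \pi_*Q$ at every point of $V$, hence on the dense open $V$. Because $\Omega^1_X$ is locally free on the integral surface $X$, its push-forward $\pi_*\Omega^1_X$ is torsion-free on the smooth curve $C$, and a morphism out of a torsion-free sheaf on a smooth curve that vanishes on a dense open must vanish identically. Combined with the injection $K_C \hookrightarrow \pi_*\Omega^1_X$ already produced, this gives the desired isomorphism.

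The step I expect to require the most care is the identification of the extension class on a general smooth fiber with the Kodaira--Spencer class, and the careful invocation of non-isotriviality as the precise hypothesis guaranteeing that this class is non-zero on a dense open. The absence of multiple fibers plays an auxiliary but real role throughout: it ensures that nearby fibers are cleanly elliptic, that the canonical bundle formula has no correction term, and that one need not disentangle orbifold contributions when comparing the relative cotangent sheaf with $\mathcal{O}_F$ on fibers.
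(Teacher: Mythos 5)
Your setup coincides with the paper's: push forward the relative cotangent sequence $0 \to \pi^*K_C \to \Omega^1_X \to \Omega^1_{X/C} \to 0$ and use non-isotriviality (nonvanishing of the Kodaira--Spencer class on smooth fibers) to control what happens generically over $C$. The gap is in your last step. The assertion that a morphism out of a torsion-free sheaf on a smooth curve which vanishes on a dense open must vanish identically is false: the evaluation map $\mathcal{O}_C \twoheadrightarrow \mathcal{O}_p$ vanishes on $C\setminus\{p\}$ but is nonzero. A statement of that kind needs torsion-freeness of the \emph{target}, and $\pi_*Q = \pi_*\Omega^1_{X/C}$ is precisely not torsion-free: it contains the torsion subsheaf $\pi_*\bigl((\Omega^1_{X/C})_{Tor}\bigr)$, supported at the finitely many points of $C$ lying under the singular fibers. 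Your fiberwise analysis over the open set $V$ of smooth fibers (which is, in sheafified form, the paper's observation that the kernel of $\sigma : \pi_*\Omega^1_{X/C} \to K_C\otimes R^1\pi_*\mathcal{O}_X$ is exactly the torsion subsheaf, because $\sigma$ is generically the Kodaira--Spencer map) only shows that the image of $\pi_*\Omega^1_X \to \pi_*Q$ is a torsion sheaf concentrated at those points; since the argument never looks at the singular fibers, it does not exclude the possibility that $\pi_*\Omega^1_X$ strictly contains $K_C$ with a nonzero torsion quotient coming from torsion relative differentials on the singular fibers.

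This is exactly where the paper invokes a genuine extra input: $H^0\bigl(C,\pi_*((\Omega^1_{X/C})_{Tor})\bigr)=H^0\bigl(X,(\Omega^1_{X/C})_{Tor}\bigr)=0$, quoted as \cite[Proposition 1]{liu00}, together with the elementary fact that a nonzero torsion sheaf on a curve always has nonzero sections; this forces the torsion subsheaf to vanish and hence gives surjectivity of $K_C \to \pi_*\Omega^1_X$. Some local statement of this kind at the singular fibers is unavoidable and is the missing idea in your proposal. Two smaller remarks: the step you flag as delicate (identifying the extension class of $\Omega^1_X|_F$ with the Kodaira--Spencer class) is standard and is not where the difficulty lies; and the role you assign to the absence of multiple fibers (via the canonical bundle formula) is not how that hypothesis actually enters the argument.
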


\begin{proof}
Consider the short exact sequence
\[
0 \rightarrow \pi^*(K_C) \rightarrow \Omega^1_X \rightarrow \Omega^1_{X/C} \rightarrow 0 .
\]
Applying $\pi_*$, we get the following long exact sequence
\[
0 \rightarrow K_C \rightarrow \pi_*(\Omega^1_X) \rightarrow \pi_*(\Omega^1_{X/C}) 
\stackrel {\sigma}{\rightarrow} K_C \otimes R^{1}\pi_*(\mathcal{O}_X)
\]
The sheaf $\pi_*(\Omega^1_{X/C})$ is a rank $1$ sheaf on $C$, while 
$K_C \otimes R^{1}\pi_*(\mathcal{O}_X)$ is a rank $1$ locally free sheaf.
The map $\sigma$ restricted to the generic fiber is the kodaira-spencer map
which is non-zero if $X$ is assumed non-isotrivial. Hence the kernel
of $\sigma$ is precisely $\pi_*(\Omega^1_{X/C})_{Tor} = \pi_*((\Omega^1_{X/X})_{Tor})$. 
So we have
\[
0 \rightarrow K_C \rightarrow \pi_*(\Omega^1_X) \rightarrow \pi_*((\Omega^1_{X/C})_{Tor}) \rightarrow 0.
\]

Now from \cite[Proposition 1]{liu00} we get $H^{0}(C,\pi_*((\Omega^{1}_{X/C})_{Tor})) = 0$.
But as $\pi_*((\Omega^{1}_{X/C})_{Tor})$ is a torsion sheaf on $C$, it has to be the $0$-sheaf
since a non-zero torsion sheaf on a curve always has sections.
Hence we have 
\[
K_C \stackrel {\cong} {\rightarrow} \pi_*(\Omega^1_X).
\]
\end{proof}

\subsection{Vertical Bundles}
We keep the assumption that $\pi : X \rightarrow C$ is a 
\textit{non-isotrivial}
elliptic fibration. Denote by $K$, the function field $k(C)$ of $C$.
For a vector bundle $V$ on $X$, we denote by $V_K$ the bundle
on $X_K : = X \times_C spec(K)$ given by pull back of $V$ to $X_K$ through the natural
map $X_K \rightarrow X$. Similarly, for an extension $L/K$ of fields
we denote by $X_L : = X \times_C spec(L)$ and $V_L$ the bundle
on $X_L$ given by the pull back of $V$ to $X_L$ through the morphism
$X_L \rightarrow X$.\\
Let us recall the definition of \textit{Vertical Bundles} 
as defined in \cite[Definition 1.3,p~512]{ba91}
\begin{defi}
\label{defi:vertical}
A rank $n$ vector bundle $V$ on $X$ is called vertical, if $V$ has a filtration
\[
(0) = V_0 \subset V_1 \subset \ldots \subset V_n = V.
\]
by sub-bundles $V_i$, with $V_i/V_{i-1} \cong \mathcal{O}_X(D_i)$
where $D_i$ are vertical divisors.
\end{defi}
The main result of this subsection is the following proposition
which relates vertical bundles $V$ on $X$ and $V_K$.
\begin{prop}
Let $V$ be a vector bundle with $c_2(V) = 0$ and $D = det(V)$
is a vertical divisor. Then, $V$ is vertical if
and only if $V_K$ is semistable.\label{prop:vert}
\end{prop}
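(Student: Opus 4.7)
The plan is to prove both directions separately, with the nontrivial content in $(\Rightarrow)$; the argument combines the structure of semistable bundles on an elliptic curve with Hodge index on $X$. For $(\Leftarrow)$: if $V$ has a filtration $0 = V_0 \subset V_1 \subset \cdots \subset V_n = V$ with $V_i/V_{i-1} \cong \mathcal{O}_X(D_i)$ and each $D_i$ vertical, then some positive integer multiple $N D_i$ is linearly equivalent to an integer combination of fiber divisors, hence pulled back from $C$, so $\mathcal{O}_X(D_i)|_{X_K}$ is torsion, in particular of degree zero. Thus $V_K$ is a successive extension of degree-zero line bundles on the elliptic curve $X_K$, and is semistable of slope zero.

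For the harder direction $(\Rightarrow)$, note first that the slope of $V_K$ equals $\det(V) \cdot F / n = 0$ since $\det V$ is vertical. By Atiyah's classification of semistable bundles on an elliptic curve, applied after a finite base extension of $K$ if necessary (which does not affect the conclusion of the proposition), $V_K$ admits a Jordan--H\"older filtration by sub-bundles whose quotients are degree-zero line bundles. I would extend this to a filtration of $V$ itself by iteratively taking saturations: given $W_{i-1} \subset V$ saturated, pick any coherent extension of the $i$-th JH quotient to a subsheaf of the torsion-free sheaf $V/W_{i-1}$ and replace it by its saturation $M_i$. Because saturated rank-one subsheaves of a torsion-free sheaf on the smooth surface $X$ are reflexive and hence invertible, $M_i \cong \mathcal{O}_X(D_i)$ for some divisor $D_i$. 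Let $W_i$ be the preimage of $M_i$ in $V$: each $W_i$ is locally free, being an extension of the (inductively) locally free $W_{i-1}$ by the line bundle $M_i$; $V/W_i$ is torsion-free, so $W_i$ is saturated in $V$; and $W_n = V$ on comparing generic fibers.

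It remains to show every $D_i$ is vertical, which is where $c_2(V) = 0$ and Hodge index come in. Since $\mathcal{O}_X(D_i)|_{X_K}$ has degree zero we have $D_i \cdot F = 0$, so each $D_i$ lies in $F^{\perp} \subset NS(X)_{\mathbb{Q}}$. From the Whitney formula applied to the filtration by line-bundle quotients, $c_1(V) = \sum_i D_i$ and $c_2(V) = \sum_{i<j} D_i \cdot D_j$, so
\[
\sum_i D_i^2 = \Bigl(\sum_i D_i\Bigr)^2 - 2\,c_2(V) = (\det V)^2 = 0,
\]
using verticality of $\det V$ (which gives $(\det V)^2 = 0$ by Lemma~\ref{lemm: vertical divisor}) and the hypothesis $c_2(V) = 0$. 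But Hodge index makes the intersection form on $F^{\perp}$ negative semi-definite with null space $\mathbb{Q}F$, so each $D_i^2 \leq 0$, and the vanishing of the sum forces $D_i^2 = 0$ for every $i$. Lemma~\ref{lemm: vertical divisor} then yields verticality of each $D_i$, hence of $V$. The main obstacle in this scheme is arranging the extended filtration of $V$ so that the successive quotients are genuine line bundles rather than rank-one torsion-free sheaves; this relies crucially on smoothness and two-dimensionality of $X$, so that reflexive rank-one sheaves are automatically invertible. The intersection-theoretic finish is short once the filtration is in hand, and the base-change subtlety for Atiyah's theorem is a minor technicality.
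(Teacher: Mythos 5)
The reverse direction and the intersection-theoretic core of your argument (Whitney formula, $(\det V)^2=0$ from Lemma~\ref{lemm: vertical divisor}, Hodge index on $F^{\perp}$, then Lemma~\ref{lemm: vertical divisor} again) do match the paper. But two steps are gapped, one of them seriously. The lighter one is the sheaf-theoretic claim that saturated rank-one subsheaves of a torsion-free sheaf on a smooth surface are reflexive, hence invertible: this is false once the ambient sheaf is not locally free. For instance $I_p\oplus 0\subset I_p\oplus\mathcal{O}_X$ (with $I_p$ the ideal sheaf of a point) is saturated but not invertible, and the Koszul sequence $0\to\mathcal{O}_X\to\mathcal{O}_X^{\oplus 2}\to I_p\to 0$ shows that even a saturated line subbundle of a locally free sheaf can have a non-invertible rank-one quotient. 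So from the second step on, the best you can arrange is $W_i/W_{i-1}\cong\mathcal{O}_X(D_i)\otimes I_{Z_i}$ with $Z_i$ a zero-dimensional subscheme. This is repairable exactly as in the paper: keep the $Z_i$, use $c_2(V)=\sum_{i<j}D_i\cdot D_j+\sum_i\mathrm{length}(Z_i)$ with each length $\geq 0$, and the same Hodge-index computation then forces $\mathrm{length}(Z_i)=0$ and $D_i^2=0$ simultaneously; your assertion that the quotients are genuine line bundles comes out a posteriori, not from general sheaf theory.

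The serious gap is the parenthetical ``after a finite base extension of $K$ if necessary (which does not affect the conclusion).'' Over $K$ itself the Jordan--H\"older quotients of the semistable bundle $V_K$ are stable of slope zero but need not be line bundles, since $K=k(C)$ is not algebraically closed (a stable quotient may only split into degree-zero line bundles after a finite extension); so you genuinely must pass to a finite extension $L/K$ to invoke Atiyah. But then the filtration lives on $V_L$, the generic fibre of a different surface $\widetilde X\to\widetilde C$, and its terms are not subsheaves of $V_K$, so your saturation procedure on $X$ has nothing $K$-rational to extend; nor does verticality of the pullback of $V$ to $\widetilde X$ formally imply verticality of $V$ on $X$. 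This is exactly the point the paper spends most of its proof on: it runs the Chern-class argument on $\widetilde X$ to show the Atiyah line bundles $L_i$ are trivial on $X_L$, then uses a Galois trace argument (averaging the section $\mathcal{O}_{X_L}\to V_L$ over $\mathrm{Gal}(L/K)$) and Galois descent to produce a filtration of $V_K$ by trivial line bundles defined over $K$, and only then extends that filtration to $V$ and reruns the intersection-theory argument on $X$. Without this descent step, or a substitute for it, your proof of the forward direction is incomplete.
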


\begin{proof}
If $V$ is vertical, then clearly $V_K$ is semistable. Now for the converse,
let $\bar{K}/K$ be an algebraic closure of $K$. Consider the elliptic curve 
$X_{\bar{K}}$ and vector bundle $V_{\bar{K}}$ on $X_{\bar{K}}$. From assumption
we have $V_{\bar{K}}$ is semistable with trivial determinant. From Atiyah's
classification results on vector bundles on elliptic curves (see \cite{atiyah}),
we have $V_{\bar{K}} \cong \oplus_i L_i I_{m_i}$ where $L_i$ are degree $0$
line bundles on $X_{\bar{K}}$ and $I_m$ denotes the unique indecomposable
bundle on $X_{\bar{K}}$ of rank $m$ and trivial determinant.
Let $L/K$ denote a finite Galois extension so that for every index $i$
$L_i \in Pic^{0}(X_L)$. We then have 
a decomposition of $V_L$ as $\oplus_i L_iI_{m_i}$. Let 
$f:\tilde{C} \rightarrow C$ be the finite Galois cover
of $C$ corresponding to $L/K$. Choose a minimal
resolution $\tilde{X}$ of $X \times_C \tilde{C}$.
Since $X$ was non-isotrivial, the same holds true
for $\tilde{X}$ and hence $\chi(\tilde{X}) > 0$.
Denote by $\tilde{V}$, the pull back of $V$
to $\tilde{X}$. 
The bundle $\tilde{V}$ also satisfies $c_2(\tilde{V}) = 0$
and $\tilde{D} = det(\tilde{V})$ is a vertical divisor
with $\tilde{D}^2 = 0$.
We have $\tilde{V}_{L} = V_L$
and hence has a filtration by the line bundles $L_i$. We
can extend this filtration on $\tilde{V}_L$ to
a filtration by torsion free subsheaves on $\tilde{V}$,
\[
(0)= \tilde{V}_0 \subset \ldots \tilde{V}_{n-1} \subset \tilde{V}_n.
\]
such that $\tilde{V}_i/\tilde{V}_{i-1} \cong \mathcal{O}_X(D_i) \otimes I_{Z_i}$.
Using the additivity of Chern classes, we get
\begin{eqnarray}
\Sigma_i D_i & = & \tilde{D}, \label{e1}\\
\Sigma_{i<j} D_iD_j + lt(Z_i) &  = & 0.\label{e2}
\end{eqnarray}
Squaring equation(\ref{e1}) and using the fact that $\tilde{D}^2 = 0$, we get
\begin{equation}
\Sigma_{i<j} D_iD_j = - \frac{1}{2} \Sigma_i D_i^2.\label{e3}
\end{equation}
Substituting equation (\ref{e3}) in equation (\ref{e2}), we get
\begin{equation}
\Sigma_i lt(Z_i) = \frac {1}{2} \Sigma_i D_i^2.\label{e4}
\end{equation}
Since by assumption $D_iF = 0$, we have 
\[D_i^2 \leq 0, \forall i.\]
On the other hand we have \[lt(Z_i) \geq 0, \,\ \forall i.\] Hence from equation 
(\ref{e4}) we get the only possibility is \[lt(Z_i) = 0 \,\ and \,\ D_i^2 = 0.\]
Now from Lemma ~\ref{lemm: vertical divisor} we can conclude $D_i$
are vertical divisors for all $i$. In particular
\[
L_i \cong \mathcal{O}_{X_L} \,\ \forall i.
\]
Now consider the short exact sequence
\[
0 \rightarrow L_0 \cong \mathcal{O}_{X_L} \stackrel {t_0} {\rightarrow} V_L \rightarrow V_L/L_0 \rightarrow 0.
\]
Let $G := Gal(L/K)$ be the Galois group. We have $G$ acts on $V_L$  and hence
on the sections $H^{0}(X_L,V_L)$. Now replace $t_0$ by $Tr(t_0) = \Sigma_{g \in G} g(t_0)$
and we can assume $t_0$ is $G$-invariant and 
hence  $L_0$ is a $G$-invariant trivial sub-bundle of $V_L$. Hence
by Galois descent we have a section $s_0 : \mathcal{O}_{X_K} \rightarrow V_K$
with $t_0$ being the induced section of $V_L$ via base change and
$V_L/L_0 \cong (V_K/s_0(\mathcal{O}_{X_K}))_{L}$. Replacing now $V_L$
by $V_L/L_0$ and $t_0$ by $t_1 : L_1 \cong \mathcal{O}_{X_L} \rightarrow V_L/L_0$
and repeating the argument we see that $V_K$ has a filtration by sub-bundles 
with sub-quotients all trivial line bundles. Extend this filtration to
a filtration of $V$ and as in the case of $\tilde{V}$, we see that
$V$ is vertical. Thus we have proved the proposition.
\end{proof}
Let us recall now the definition of Higgs bundles on a projective variety $Y$.
A Higgs bundle on $Y$ is a pair $(V,\theta)$, where $V$ is a vector bundle on $Y$
and $\theta : V \rightarrow V \otimes \Omega^1_Y$ is a homomorphism with
\[
\theta \wedge \theta = 0.
\]
We fix a polarization $H$ on $Y$ and let $r$ be dim($Y$).
We say Higgs bundle $(V,\theta)$ on $Y$ is semistable if
for every subsheaf $W \subset V$ preserved by $\theta$
(i.e $\theta(W) \subset W \otimes \Omega^1_Y$), we have
\[
c_1(W).H^{r-1}/rank(W) \leq c_1(V).H^{r-1} /rank(V).
\]
Now consider the case when $Y$ is a surface.
For a vector bundle $V$ on $Y$, Denote by $\Delta(V)$ the number $(r-1)c_1(V)^2 - 2rc_2(V)$
which is called the \textit{Bogomolov Number} of $V$. If a vector bundle $V$
admits a Higgs field $\phi$ so that $(V,\phi)$ is a semistable Higgs bundle
on $Y$ (with respect to $H$), then we have the Bogomolov inequality
\[
\Delta(V) \leq 0.
\]
Further if $\Delta(V) = 0$, then the pair $(V,\phi)$ is semi-stable
with respect to any other polarization on $Y$ \cite[Theorem 1.3]{br06}. 
Hence now as a corollary of Proposition ~\ref{prop:vert} we have the following 
generalization of \cite[Lemma 1.4,~p 512]{ba91}.

\begin{coro}
\label{coro:vert}
If $(V,\theta)$ is a semistable Higgs bundle with $c_2(V) = 0$ and $D = det(V)$ vertical,
then $V$ is a vertical bundle.
\end{coro}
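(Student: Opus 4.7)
The plan is to reduce to Proposition~\ref{prop:vert} by showing that $V_K$ is semistable on the generic fiber $X_K$. The starting observation is that the Bogomolov number satisfies $\Delta(V) = 0$: the verticality of $D = \det V$ gives $c_1(V)^2 = D^2 = 0$ (by Lemma~\ref{lemm: vertical divisor}), and $c_2(V) = 0$ by hypothesis. Invoking \cite[Theorem 1.3]{br06} as recalled just above, Higgs semistability of $(V,\theta)$ with respect to $H$ then automatically extends to Higgs semistability with respect to \emph{every} polarization on $X$. This polarization freedom will produce the required contradiction.

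Suppose for contradiction that $V_K$ is not semistable, and let $W_K \subset V_K$ be the maximal destabilizing subsheaf, so that $W_K$ is semistable on the elliptic curve $X_K$ with $\mu(W_K) > \mu(V_K) = (D\cdot F)/\mathrm{rk}(V) = 0$ and $\mu_{\max}(V_K/W_K) < \mu(W_K)$. The heart of the argument is to show that $W_K$ is preserved by $\theta_K$. Restricting the pullback sequence
\[
0 \to \pi^{*}\Omega^1_C|_{X_K} \to \Omega^1_X|_{X_K} \to \Omega^1_{X/C}|_{X_K} \to 0
\]
one sees that the two outer terms are degree-zero line bundles on $X_K$: the first as the pullback of a line bundle from a point, the second as the trivial canonical bundle of the elliptic curve $X_K/K$. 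Hence every Jordan--H\"older factor of $\Omega^1_X|_{X_K}$ has slope $0$, so every JH factor of $(V_K/W_K)\otimes\Omega^1_X|_{X_K}$ has slope strictly below $\mu(W_K)$, and the induced map $W_K \to (V_K/W_K)\otimes\Omega^1_X|_{X_K}$ vanishes by the usual slope estimate. Taking the saturation $W \subset V$ of $W_K$ and using that $X_K \hookrightarrow X$ contains the generic point of $X$ while $V/W$ is torsion-free, this $\theta_K$-invariance propagates to $\theta$-invariance of $W$ on all of $X$.

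Finally, consider the family of polarizations $H_t := H + tF$ for $t \geq 0$; each is ample by Nakai--Moishezon since $H$ is ample and $F$ is nef. By the first step, $(V,\theta)$ is Higgs semistable with respect to every $H_t$, so $\mu_{H_t}(W) \leq \mu_{H_t}(V)$. The right-hand side is independent of $t$ because $c_1(V)\cdot F = D\cdot F = 0$, whereas
\[
\mu_{H_t}(W) = \mu_H(W) + t\cdot \mu(W_K)
\]
grows without bound since $\mu(W_K) > 0$. This contradicts the inequality for large $t$, so $V_K$ must be semistable, and Proposition~\ref{prop:vert} then yields verticality of $V$.

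The principal technical point is the $\theta$-invariance of the maximal destabilizer on $X_K$; this is precisely where the elliptic-surface geometry (triviality of $\Omega^1_{X_K/K}$ together with verticality of $K_X$) is indispensable, and the rest of the argument is a polarization-wiggling trick that turns the fiberwise destabilization into a global one.
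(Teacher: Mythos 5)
Your proof is correct and follows essentially the same route as the paper: show $\theta_K$ preserves the maximal destabilizing subsheaf of $V_K$ using that $\Omega^1_X|_{X_K}$ is an extension of degree-zero line bundles, extend it to a $\theta$-invariant saturated subsheaf $W\subset V$, and contradict semistability for the polarizations $H+tF$ via $\Delta(V)=0$ and \cite[Theorem 1.3]{br06}. The only cosmetic difference is that the paper splits the Harder--Narasimhan filtration of $V_K$ into a direct sum before arguing, whereas you work directly with the maximal destabilizer and $\mu_{\max}$ of the quotient.
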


\begin{proof}
From Proposition \ref{prop:vert} it is enough to show $V_K$ is semistable. If $V_K$ is not semistable,
Then since $X_K$ is a genus $1$ curve, The H-N filtration of $V_K$ induces
a decomposition $V_K = \oplus_{i=1}^j W_i$ where $W_i$ is the destabilizing subsheaf of 
$V_K/W_{i-1}$ if we set $W_0 = (0)$. In particular each $W_i$ is semistable and 
\[
deg(W_0) > \ldots > deg(W_j).
\]
Now $(\Omega^1_X)_K$ is a rank $2$ vector bundle on $X_K$ which is an extension of 
$\mathcal{O}_{X_K}$ by itself. In particular $(\Omega^1_X)_K$ is semistable of
degree $0$ and so the bundles $W_i \otimes (\Omega^1_X)_K$ are semistable
with $deg(W_i \otimes (\Omega^1_X)_K) = 2deg(W_i)$ and 
$rk(W_i \otimes (\Omega^1_X)_K) = 2rk(W_i)$. 
We have then
\[
\mu(W_0) = deg(W_0)/rk(W_0) > \mu(W_i \otimes (\Omega^1_X)_K) = deg(W_i)/rk(W_i), \,\ i \geq 2.
\]
So 
\[
H^{0}(X_K,Hom(W_0,W_i\otimes (\Omega^1_X)_K)) = (0), \forall i \geq 2.
\]
Hence, we have $\theta_K(W_0) \subseteq W_0 \otimes (\Omega^1_X)_K$.
Now extend $W_0$ to a torsion free sub-sheaf $W \subset V$ with $V/W$
torsionfree as well. Since $\theta_K$ preserves $W_0$, the Higgs field
$\theta$ preserves the subsheaf $W$. On the other hand, as we have
$c_1(W).F = deg(W_0) > 0$, for a suitable $m>>0$ and the polarisation
$H+mF$, the slope $W$ exceeds that of $V$. But since $\Delta(V) = 0$,
this will contradict the stability of $(V,\theta)$ with respect to $H$. 
\end{proof}

Before we end this section we would like to address two natural
questions regarding vertical bundles. The first one is 
about when a sub-sheaf of a vertical bundle $V$ is itself vertical.
Clearly such a sub-sheaf $N \subset V$ satisfies $c_1(N).F = 0$.
We will see below [Lemma \ref{lem:subvert}] that this condition is 
in fact sufficient. The other question is specific to the
case when $\pi$ has no multiple fibers. In this situation
there is a natural class of vertical bundles, which are
the pull backs of bundles on $C$ to $X$. If $V$ is such
a bundle then we have $V_K = \mathcal{O}_{X_K}^{\oplus r}$
where $r = rk(V)$. Once again this condition turns
out to be sufficient [Lemma \ref{lem:pullback}]. 
\begin{lem}
Let $V$ be a vertical bundle and $N \subset V$ a sub-sheaf with torsion
free quotient $V/N$. Then, $N$ is vertical precisely when 
$c_1(N).F = 0$.\label{lem:subvert}
\end{lem}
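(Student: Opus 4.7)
The forward implication is immediate from the definition of verticality: if $N$ carries a filtration by sub-bundles with quotients $\mathcal{O}_X(D'_i)$, $D'_i$ vertical, then $c_1(N)\cdot F = \sum D'_i\cdot F = 0$.

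For the converse I assume $c_1(N)\cdot F = 0$ and aim to produce a filtration of $V$ that refines the inclusion $N\subset V$,
\[
0 = V^0 \subset V^1 \subset \cdots \subset V^s = N \subset V^{s+1} \subset \cdots \subset V^r = V,
\]
whose successive quotients are line bundles $\mathcal{O}_X(D_i)$ with $D_i\cdot F = 0$. Once that is in hand, Whitney's formula gives $c_1(V) = \sum D_i$ and $c_2(V) = \sum_{i<j} D_i D_j$. Verticality of $V$ supplies $c_1(V)^2 = 0$ and $c_2(V) = 0$, so $\sum D_i^2 = 0$; each $D_i\cdot F = 0$ forces $D_i^2 \leq 0$ via Lemma \ref{lemm: vertical divisor}, whence every $D_i^2 = 0$ and the same lemma declares each $D_i$ vertical. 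The first $s$ quotients then display $N$ as vertical.

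To build this filtration I work on the generic fibre $X_K$. The proof of Proposition \ref{prop:vert} shows that, $V$ being vertical, $V_{\bar K}$ is a direct sum of Atiyah indecomposables $I_{m_i}$ of trivial determinant, so all Jordan--H\"{o}lder factors of $V_K$ are trivial. Since $V/N$ is torsion-free and $c_1(V/N)\cdot F = c_1(V)\cdot F - c_1(N)\cdot F = 0$, both $N_K$ and $(V/N)_K$ are saturated (sub and quotient respectively) sheaves of $V_K$ of slope $0$, hence semistable with all Jordan--H\"{o}lder factors trivial; flat base change then gives $H^0(X_K, N_K)\neq 0$ and $H^0(X_K, (V/N)_K)\neq 0$. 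Iteratively, at each stage I choose a non-zero section of the running quotient sheaf, descend it from a Galois extension $L/K$ via the averaging trick of Proposition \ref{prop:vert}, lift to $X$ by clearing denominators on $C$, and saturate the image; this peels off a line sub-bundle $\mathcal{O}_X(D_i)$ whose restriction to $X_K$ is trivial, so $D_i\cdot F = 0$. I expect the main obstacle to be exactly this construction --- in particular ensuring that the Galois-averaged section is non-zero and that each saturation is a genuine line bundle on the surface $X$ --- after which the final Chern-class tally is essentially a replay of the one in Proposition \ref{prop:vert}.
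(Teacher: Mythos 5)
Your overall route is the paper's: use (the proof of) Proposition~\ref{prop:vert} to see that $V_K$ is an iterated extension of trivial line bundles, deduce the same for $N_K$ and $(V/N)_K$, extend a filtration from the generic fibre to $X$, and finish with a Chern-class count. But the step you yourself flag as the main obstacle is a genuine gap as formulated, and it is exactly where the paper proceeds differently. You cannot in general arrange the extended filtration to have invertible quotients: extending a filtration from $X_K$ and saturating yields successive quotients that are only rank-one torsion-free, i.e.\ of the form $\mathcal{O}_X(D_i)\otimes I_{Z_i}$ with $Z_i$ a finite subscheme (a saturated rank-one subsheaf of a merely torsion-free quotient need not be reflexive --- think of an ideal sheaf $I_Z\subset \mathcal{O}_X$, which is saturated but not invertible), and no care with the chosen sections removes this. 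The repair is not to insist on line bundles: allow the quotients $\mathcal{O}_X(D_i)\otimes I_{Z_i}$, keep $D_i\cdot F=0$, and redo your tally with the length terms included, $c_1(V)=\Sigma_i D_i$ and $c_2(V)=\Sigma_{i<j}D_iD_j+\Sigma_i lt(Z_i)$. Since $V$ vertical gives $c_1(V)^2=0=c_2(V)$, this yields $\Sigma_i D_i^2=2\Sigma_i lt(Z_i)\geq 0$; on the other hand each $\mathcal{O}_X(D_i)$ restricts trivially to $X_K$, so $D_i$ is linearly equivalent to a vertically supported divisor and $D_i^2\leq 0$ (this inequality is the Zariski/Hodge-index fact used in Proposition~\ref{prop:vert}, not literally Lemma~\ref{lemm: vertical divisor}, which you may invoke only after knowing $D_i^2=0$). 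Hence all $Z_i=\emptyset$ and all $D_i^2=0$, so every $D_i$ is vertical and $N$ (and $V/N$) is vertical. This is precisely the paper's argument.

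A smaller point: the Galois-averaging/descent step is superfluous here. The proof of Proposition~\ref{prop:vert} already produces a filtration of $V_K$ over $K$ itself with trivial quotients, so $V_K$ is semistable of degree $0$ with all Jordan--H\"older factors trivial over $K$; since $N_K$ and $(V/N)_K$ are a saturated subsheaf and a torsion-free quotient of the same slope, their Jordan--H\"older factors are among those of $V_K$ and hence trivial, so both admit filtrations with trivial quotients over $K$ with no further descent. One then simply extends any such filtrations to $N$ and $V/N$ by rank-one torsion-free subquotients and runs the count above.
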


\begin{proof}
Since $N$ and $V/N$ are torsion free, we have $N_K$ and $(V/N)_K$
are locally free on $X_K$. Further by assumption both
are of degree $0$. On the other hand as $V$ is vertical,
$V_K \cong \oplus_i I_{k-i}$, where $I_{k_i}$ is the
unique indecomposable bundle of rank $k_i$ and trivial
determinant. Hence both $N_K$ and $(V/N)_K$
also admit filtrations where the successive quotients are
trivial line bundles. Any such filtration on $N_K$ and 
$(V/N)_K$ can be extended to a filtration on $N$
and $V/N$ with successive quotients all of rank $1$
and of the form $\mathcal{O}_X(D_i) \otimes I_{Z_i}$
where $D_i$ is a vertically supported divisor and $Z_i$
is a closed set of points on $X$. But this filtration
is also a filtration on $V$. Now an argument involving
Chern classes as in the proof of Proposition \ref{prop:vert}
gives us $Z_i = \emptyset$ for every $i$ and $D_i$
are vertical divisors. Hence both $N$ and $V/N$
are vertical.

\end{proof}

\begin{lem}
Assume $\pi$ has no multiple fibers. Then a vertical bundle $V$
is isomorphic to $\pi^{*}(W)$ where $W$ is a bundle on $C$
if and only if $V_K = \mathcal{O}_{X_K}^{\oplus r}$ where $r = rk(V)$.
\label{lem:pullback}
\end{lem}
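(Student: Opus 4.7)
The $(\Rightarrow)$ direction is immediate: writing $\pi_K \colon X_K \to \mathrm{Spec}(K)$ for the base change of $\pi$, $V \cong \pi^*W$ implies $V_K \cong \pi_K^*(W_K)$, and since $W_K$ is a free $K$-module of rank $r$, the bundle $V_K$ is trivial.

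For $(\Leftarrow)$ I would induct on $r = \mathrm{rk}(V)$. The base case $r=1$ reduces to showing that every vertical line bundle is a pullback. Given $V = \mathcal{O}_X(D)$ with $D$ vertical, one can replace $D$ by a linearly equivalent divisor of the form $\sum_{c,i} n_{c,i} C_{c,i}$ supported on the irreducible components $C_{c,i}$ of the fibres $F_c$. Since intersections across distinct fibres vanish, $D^2 = 0$ forces $\bigl(\sum_i n_{c,i} C_{c,i}\bigr)^2 = 0$ for each $c$. Zariski's Lemma applied to the negative semi-definite intersection form on the components of $F_c$, together with the no-multiple-fibre hypothesis $\gcd_i m_{c,i} = 1$, yields $\sum_i n_{c,i} C_{c,i} = \lambda_c F_c$ with $\lambda_c \in \mathbb{Z}$, so $D \sim \pi^*(\sum_c \lambda_c c)$ and $V \cong \pi^*\mathcal{O}_C(\sum_c \lambda_c c)$.

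For the inductive step, take the defining vertical filtration $0 = V_0 \subset \cdots \subset V_r = V$ and set $V_1 = \pi^*L_1$ via the base case. Restricting $V_1 \hookrightarrow V$ to the generic fibre gives a saturated inclusion (since $V/V_1$ is locally free) of the trivial line bundle into $\mathcal{O}_{X_K}^{\oplus r}$, determined by a nonzero vector $v \in K^r$; the quotient $(V/V_1)_K \cong \mathcal{O}_{X_K}^{\oplus r}/\mathcal{O}_{X_K}\cdot v$ is therefore trivial. Hence $V/V_1$ is vertical of rank $r-1$ with trivial generic restriction, and the inductive hypothesis gives $V/V_1 \cong \pi^*W'$ for some bundle $W'$ on $C$.

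It remains to descend the extension $0 \to \pi^*L_1 \to V \to \pi^*W' \to 0$ to $C$. Setting $M := L_1 \otimes (W')^{\vee}$, its class $\xi \in H^1(X, \pi^*M)$ fits into the Leray five-term sequence (using $\pi_*\mathcal{O}_X = \mathcal{O}_C$ and the projection formula $R^1\pi_*\pi^*M \cong M \otimes R^1\pi_*\mathcal{O}_X$)
\[
0 \to H^1(C, M) \to H^1(X, \pi^*M) \to H^0(C, M \otimes R^1\pi_*\mathcal{O}_X).
\]
The sheaf $M \otimes R^1\pi_*\mathcal{O}_X$ is locally free on $C$, so the image of $\xi$ in its global sections is determined by its generic stalk, which via flat base change is the extension class of $0 \to \mathcal{O}_{X_K} \to V_K \to \mathcal{O}_{X_K}^{\oplus (r-1)} \to 0$; this vanishes since $V_K$ is trivial. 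Therefore $\xi$ lifts to $\tilde\xi \in H^1(C, M)$, and the associated extension $W$ of $W'$ by $L_1$ on $C$ satisfies $\pi^*W \cong V$. The main technical hurdle I anticipate is this descent step; the base case and the extraction of a vertical sub-pullback are essentially bookkeeping once the filtrations are in hand.
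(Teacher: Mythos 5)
Your argument is correct, but it follows a genuinely different route from the paper's. The paper does not induct on the rank at all: it observes that, in the absence of multiple fibers, every vertical divisor restricts trivially to each closed fiber, so $V_c$ is an iterated extension of copies of $\mathcal{O}_{X_c}$ and $h^{0}(X_c,V_c)\leq r$, with equality exactly when $V_c$ is trivial; by semicontinuity the locus $\{c\in C \mid h^{0}(X_c,V_c)=r\}$ is closed, the hypothesis $V_K\cong\mathcal{O}_{X_K}^{\oplus r}$ places the generic point in it, hence $V$ is trivial on \emph{every} fiber and $V\cong\pi^{*}(\pi_*V)$ by cohomology and base change. You instead prove the rank-one case by a divisor computation (Zariski's lemma on fiber components plus $\gcd$ of multiplicities $=1$, which is where the no-multiple-fiber hypothesis enters for you), and then descend the extension $0\to\pi^{*}L_1\to V\to\pi^{*}W'\to 0$ through the Leray five-term sequence. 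Your approach localizes the hypothesis in the base case, builds $W$ explicitly as an iterated extension on $C$, and replaces the Grauert-type base-change step (which the paper leaves implicit in ``$V\cong\pi^{*}\pi_*V$'') by the five-term sequence; the paper's approach is shorter, handles all ranks at once, and needs no discussion of extension classes. Two small points in your write-up deserve an explicit sentence: the vanishing of the restricted class is not a formal consequence of the middle term being trivial, but it does follow from your own observation that the inclusion $\mathcal{O}_{X_K}\hookrightarrow\mathcal{O}_{X_K}^{\oplus r}$ is given by a constant vector $v\in K^{r}$ (any $K$-linear functional sending $v$ to $1$ gives a retraction), or alternatively from the fact that a nonzero class would force $V_K\cong I_2\oplus\mathcal{O}_{X_K}^{\oplus(r-2)}$, which has only $r-1$ sections; and the identification of the generic stalk of the edge-map image with the class of the restricted extension uses flat base change to $X_K$ together with its compatibility with the Leray edge map, which is standard but should be stated.
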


\begin{proof}
Let $V$ be a vertical bundle with $V_K = \mathcal{O}_{X_K}^{\oplus r}$.
Since by assumption $\pi$ has no multiple fibers, a line bundle
corresponding to a vertical divisor restricts to the trivial
line bundle on any fiber of $\pi$. Hence $V$ restricted to any fiber
is an iterated extension of trivial line bundles. In particular
if for $c \in C$, we denote by $X_c$ by the fiber (scheme theoretic)
of $\pi$ above $c$ and $V_c$ the restriction $V\mid_{X_c}$,
then as $h^{0}(X_c,\mathcal{O}_{X_c}) = 1$, we have
\[
h^{0}(X_c,V_c) \leq r.
\]
The equality occurs precisely at the points $c \in C$
where $V_c$ is the trivial rank $r$ bundle on $X_c$.
Now from semi-continuity principle the set 
$Z=\{ c \in C \mid h^{0}(X_c,V_c) = r \}$ is a non-empty
closed subset of $C$. But on the other hand we have
if $\zeta \in C$, the generic point of $C$, then
$h^{0}(X_{\zeta},V_{\zeta}) = h^{0}(X_K,V_K) = r$. Hence
$\zeta \in Z$ and thus $Z = C$. Thus $V$ restricts to
the trivial rank $r$ bundle on every fiber and consequently
$V \cong \pi^{*}(\pi_*(V))$.
\end{proof}

\section{Main Theorem}
\label{secs:3}
Let $\pi: X \rightarrow C$ denote a non-isotrivial elliptic
fibration possibly with multiple fibers and $\chi(X) > 0$.  
Fix a polarization $H$ on $X$.
Denote by $\mathcal{C}^{vHiggs}_X$, the category whose objects
are ($H$-)semistable Higgs bundles $(V,\theta)$ on $X$ with
$c_2(V) = 0$, $det(V)$ a vertical divisor, and morphisms
being Higgs bundle morphisms. Let $\textbf{c}: = \{c_1,\ldots,c_l\}$
be the points on $C$ where the fibers of $\pi$ are multiple.
Let the multiplicities of these fibers be $\textbf{m} := \{m_1,\ldots,m_l\}$
respectively. Recall the notion of a parabolic vector bundle
on $C$. A parabolic vector bundle on $C$ with a parabolic
structure at a point $c \in C$, consists of a vector
bundle $V$, together with a Flag 
\[ F^{\bullet}(V_c) := (0) \subset F^1(V_c) \subset F^2(V_c) \subset \ldots F^{r}(V_c) = V_c.\]
and weights $\alpha_i \in \mathbb{R}$ assigned to each subspace $F^{i}(V_c)$
such that \[0 < \alpha_1 <  \ldots < \alpha_r \leq 1.\]
To such a parabolic vector bundle $(V,F^{\bullet}(V_c),\{\alpha_i\})$
we can associate a real number called the parabolic degree given by
\[
Pardeg(V) := deg(V) + \Sigma_i \alpha_idim(F^{i}(V_c)/F^{i-1}(V_c)).
\]
In general, if there are parabolic structures on more than one point,
then the definition of parabolic degree has to be appropriately
modified.
There is a natural induced parabolic structure on every sub-bundle
of $V$ and we have an obvious notion of semistability (stability) using
the parabolic degree instead of the usual degree.
Now we also can define a parabolic Higgs bundle. Since 
in literature there are two different notions of a Higgs field,
we want to specify what we mean by a parabolic Higgs field.
For a parabolic vector bundle $(V,F^{\bullet}(V_c),\{\alpha_i\})$
as defined above, a parabolic Higgs field is a morphism
\[
\phi : V \rightarrow V \otimes K_C(c)
\]
such that we have
\[
\phi(F^i(V_c)) \subseteq F^{i-1}(V_c) \otimes K_C(c).
\]
Now we can define semistability for a parabolic Higgs bundle
$(V,F^{\bullet}(V_c),\phi)$ as in the usual case by
restricting the slope condition to sub-bundles preserved
by the Higgs field $\phi$. The definition of parabolic
Higgs bundles in the case of parabolic structures at more
than one point is the same as above except we have to
replace $K_C(c)$ by $K_C(c_1 + \ldots + c_l)$, where
$c_i$ are the parabolic points. Assume from now on
that $g(C) \geq 2$. Further, assume the weights associated with
the filtration $F^{\bullet}(V_{c_j})$ at $c_j$ all 
are rational and lie in $\frac {1}{m_j} \mathbb{Z} \cap [0,1]$.
Let $\mathcal{C}^{ParHiggs}_{(C,\textbf{c},\textbf{m})}$
denote the category of parabolic semistable Higgs bundles
with weights as described above. The following theorem
is the main result of this article
\begin{theo}
\label{thm:main}
There is a natural equivalence of categories $\mathcal{C}^{vHiggs}_X$
and $\mathcal{C}^{ParHiggs}_{(C,\textbf{c},\textbf{m})}$.
\end{theo}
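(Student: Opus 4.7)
The plan is to prove Theorem~\ref{thm:main} in two stages, following the strategy sketched in the introduction. In stage~(i), I handle the case in which $\pi$ has no multiple fibers; in stage~(ii), I reduce the general case to (i) by passing to a ramified Galois cover.

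For stage~(i), let $(V,\theta)\in\mathcal{C}^{vHiggs}_X$. Corollary~\ref{coro:vert} already yields that $V$ is a vertical bundle, and by Lemma~\ref{lem:pullback} it suffices to show $V_K\cong\mathcal{O}_{X_K}^{\oplus r}$ in order to conclude $V\cong\pi^*W$ for a bundle $W$ on $C$. Since $V$ is vertical, Atiyah's classification applied to $V_{\bar K}$ forces a decomposition $\bigoplus_i L_i\otimes I_{m_i}$, and the argument used in the proof of Proposition~\ref{prop:vert} shows that after Galois descent each $L_i$ is trivial. The delicate point is to rule out Atiyah summands $I_m$ with $m\ge 2$, which is where the Higgs field enters: on the elliptic generic fiber, $(\Omega^1_X)_K$ is a self-extension of $\mathcal{O}_{X_K}$, and a careful analysis of $\theta_K$ combined with $\theta\wedge\theta=0$ should produce a $\theta$-invariant subsheaf of $V$ whose slope exceeds $\mu(V)$ after perturbing the polarisation to $H+mF$ for $m\gg 0$, contradicting semistability of $(V,\theta)$. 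Once $V\cong\pi^*W$ is established, Lemma~\ref{lem:forms} together with the projection formula gives
\[
H^{0}(X,\mathcal{E}nd(V)\otimes\Omega^1_X)=H^{0}(C,\mathcal{E}nd(W)\otimes K_C),
\]
so $\theta$ descends uniquely to a Higgs field $\phi$ on $W$. The functors $(V,\theta)\mapsto(\pi_*V,\phi)$ and $(W,\phi)\mapsto(\pi^*W,\pi^*\phi)$ are then mutually inverse, with semistability passing in both directions via the same polarisation-perturbation trick used in the proof of Corollary~\ref{coro:vert}.

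For stage~(ii), the hypothesis $g(C)\ge 2$ allows one to choose a Galois cover $p:\widetilde C\to C$ with group $G$ ramified exactly at $c_i$ with index $m_i$. Setting $\widetilde X$ to be a relatively minimal desingularisation of $X\times_C\widetilde C$, one obtains $q:\widetilde X\to X$ \'etale $G$-Galois and $\widetilde\pi:\widetilde X\to\widetilde C$ a non-isotrivial elliptic fibration with no multiple fibers. The Biswas--Seshadri correspondence identifies $\mathcal{C}^{ParHiggs}_{(C,\textbf{c},\textbf{m})}$ with the category of $G$-equivariant semistable Higgs bundles on $\widetilde C$; the rationality constraint that weights lie in $\frac{1}{m_i}\mathbb{Z}\cap[0,1)$ is precisely what records the ramification index $m_i$. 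On the $X$-side, since $q$ is \'etale, pullback $V\mapsto q^*V$ (with $G$-invariant pushforward as quasi-inverse) gives an equivalence between $\mathcal{C}^{vHiggs}_X$ and the $G$-equivariant objects of $\mathcal{C}^{vHiggs}_{\widetilde X}$; here one checks that \'etale pullback preserves vanishing of $c_2$, verticality of the determinant, and semistability with respect to the pulled-back polarisation. Applying stage~(i) to $\widetilde\pi$ and observing that the equivalence produced is canonical enough to commute with the $G$-action yields the equivalence at the equivariant level, and composing with Biswas--Seshadri delivers the theorem.

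The principal obstacle I anticipate is the Higgs-field argument in stage~(i) that forces $V_K$ to be the trivial bundle and not merely an iterated extension of trivial line bundles; everything else is bookkeeping around semistability, \'etale descent, and the parabolic/equivariant dictionary. Secondary subtleties are to verify that the equivalence produced in stage~(i) is natural enough to respect the $G$-action so that it descends correctly in stage~(ii), and to check that the Biswas--Seshadri correspondence on the curve matches Higgs-bundle semistability on both sides with the precise weight constraints dictated by the ramification data.
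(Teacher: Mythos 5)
Your overall architecture coincides with the paper's: reduce to the no-multiple-fiber case via a Galois cover $p:\widetilde C\to C$ ramified at the $c_i$ with index $m_i$ (this is where $g(C)\ge 2$ enters), use that $q:\widetilde X\to X$ is \'etale Galois to identify $\mathcal{C}^{vHiggs}_X$ with equivariant objects on $\widetilde X$, use the equivariant/parabolic dictionary on the curve, and in the no-multiple-fiber case show every object of $\mathcal{C}^{vHiggs}_X$ is $\pi^*$ of a semistable Higgs bundle on $C$, with fullness and faithfulness coming from Lemma~\ref{lem:forms} and the projection formula. Stage (ii) is essentially the paper's argument, except that the point you flag as a ``secondary subtlety'' --- that the equivalence from stage (i) descends to the $\Gamma$-equivariant categories --- is not automatic from ``canonicity'': the paper proves essential surjectivity of $\tilde\pi^*_\Gamma$ by choosing an isomorphism $\lambda$ and transporting the equivariant cocycle $\alpha_g$ to a cocycle $\beta_g$ on $(W,\phi)$, using that $\tilde\pi^*$ is fully faithful to see that $\beta_g$ lives on the curve. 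That step is short but does need to be written.

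The genuine gap is in stage (i), and it sits exactly at what you correctly identify as the principal obstacle: proving $V_K\cong\mathcal{O}_{X_K}^{\oplus r}$. The mechanism you propose --- that $\theta$ should produce a $\theta$-invariant subsheaf whose slope exceeds $\mu(V)$ after passing to $H+mF$, contradicting semistability --- cannot work. Any subsheaf $W\subset V$ obtained from the generic-fiber analysis has $c_1(W)\cdot F=0$ (it extends a degree-$0$ subbundle of the semistable degree-$0$ bundle $V_K$), and $c_1(V)\cdot F=0$ as well, so replacing $H$ by $H+mF$ changes neither slope: the fiber-twisting trick only has leverage when the subsheaf has strictly positive fiber degree, as in the proof of Corollary~\ref{coro:vert}. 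Moreover no contradiction with semistability is available in principle: pullbacks of strictly semistable Higgs bundles on $C$ are semistable objects of $\mathcal{C}^{vHiggs}_X$ possessing plenty of $\theta$-invariant fiber-degree-$0$ subsheaves, so the existence of such a subsheaf cannot destabilize $(V,\theta)$. What the paper does instead is constructive, not by contradiction: it first proves that $(\Omega^1_X)_K\cong I_2$ and establishes the nilpotency Lemma~\ref{lem:generic} and the dichotomy Lemma~\ref{lem:main} (either $V_K$ is trivial with the $t$-component of $\theta_K$ vanishing, or there is a $\theta_K$-invariant degree-$0$ subbundle). In the second case it runs an induction on rank: after perturbing the polarization, the maximal $\theta$-invariant destabilizing subsheaf $V_{max}$ has $\det(V_{max})\cdot F=0$, both $V_{max}$ and $V/V_{max}$ are vertical (Chern-class argument as in Proposition~\ref{prop:vert}) and, by induction, are pullbacks $\pi^*(W_0)$, $\pi^*(W_1)$ of semistable Higgs bundles on $C$; then applying $\pi_*$ to the extension, the connecting map $\eta$ between the descended Higgs bundles must vanish because $\deg L>0$ (relative minimality and $\chi(X)>0$) makes the relevant slope strictly drop, and semistability on $C$ forbids nonzero maps that decrease slope. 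Hence $\operatorname{rk}\pi_*(V)=\operatorname{rk}V$, which forces $h^0(X_K,V_K)=r$ and thus triviality of $V_K$, and one concludes with Lemma~\ref{lem:pullback}. (The paper also gives an alternative spectral-cover argument when $(V,\theta)$ has no sub-Higgs sheaves.) Without some argument of this kind --- in particular without using $\deg L>0$ or an equivalent global input --- your sketch does not rule out generic fibers of type $I_m$, $m\ge 2$, so the essential surjectivity of $\pi^*$, and with it the theorem, remains unproved in your proposal.
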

Since we have assumed $g(C) \geq 2$, there exists a Galois cover
$p : \widetilde{C} \rightarrow C$ with Galois group denoted by $\Gamma$
and the local ramification groups above $c_j$ being the 
cyclic group $\frac {\mathbb{Z}} {m_j \mathbb{Z}}$ for every $j$.
Let $\mathcal{C}^{\Gamma-Higgs}_{\tilde{C}}$ denote the category of
$\Gamma$-equivariant Higgs bundles on $\tilde{C}$.
We then have a natural equivalence of categories
\[
p^{\Gamma}_{*} : \mathcal{C}^{\Gamma-Higgs}_{\widetilde{C}} 
\stackrel {\cong}{\rightarrow } 
\mathcal{C}^{ParHiggs}_{(C,\textbf{c},\textbf{m})}
\]
Now consider the 
commutative diagram 
\[
\begin{CD}
\widetilde{X} @>q>> X\\
@VV\tilde{\pi}V @VV \pi V\\
\widetilde{C} @>p>> C
\end{CD}
\]
where $\widetilde{X}$ is a minimal desingularization of $X \times_C \widetilde{C}$
(see \cite[Section~1.6,p~95-108]{fr94}).
We have $\tilde{\pi} : \widetilde{X} \rightarrow \widetilde{C}$
is a relatively minimal non-isotrivial elliptic surface with no multiple fibers.
Further, we have $q$ is an etale Galois cover with Galois group $\Gamma$.\\

Denote by $\mathcal{C}^{\Gamma-vHiggs}_{\widetilde{X}}$, the category 
of $\Gamma$-semistable $\Gamma$-equivariant 
Higgs bundles on $\widetilde{X}$ with $c_2 = 0$ and $c_1$ vertical.
From Galois descent, we have an equivalence of categories
\[
q^{*} : \mathcal{C}^{vHiggs}_X \stackrel {\cong} {\rightarrow} 
\mathcal{C}^{\Gamma-vHiggs}_{\widetilde{X}}.
\]
Hence to prove the theorem it suffices to construct a natural
equivalence between the categories $\mathcal{C}^{\Gamma-vHiggs}_{\widetilde{X}}$
and $\mathcal{C}^{\Gamma-Higgs}_C$. To that end we will first
prove the theorem in the case when the elliptic fibration
has no multiple fibers.
\subsection{Proof of Theorem~\ref{thm:main} in the case of no multiple fibers}
\label{subs:3}
Denote by $\mathcal{C}^{Higgs}_C$, the category of semistable Higgs bundles on $C$.
we have a natural map 
\[
d\pi : \pi^{*}(K_C) \rightarrow \Omega^1_X
\]
Now for a Higgs bundle $(W,\phi)$ on $C$, let $V = \pi^{*}(W)$. Then 
we denote by $d\pi(\phi) \in Hom(V,V \otimes \Omega^1_X)$ 
the composition $(Id_V \otimes d\pi) \circ (\pi^{*}(\phi))$.
Clearly $d\pi(\phi) : V \rightarrow V \otimes \Omega^1_X$ is a Higgs field
on $V$. Now we have the following Lemma
\begin{lem}
If $(W,\phi)$ is a semistable Higgs bundle on $C$, then for any
chosen polarisation on $X$, the Higgs bundle $(V,d\pi(\phi))$ is
semistable on $X$
\end{lem}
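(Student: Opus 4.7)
The plan is to use the vanishing of the Bogomolov discriminant of $V = \pi^{*}W$ to reduce to a single polarization, and then to verify semistability in the fiber-heavy regime.

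Since $W$ lives on a curve, $c_{2}(V) = \pi^{*}c_{2}(W) = 0$ and $c_{1}(V) = \pi^{*}c_{1}(W)$ is numerically equivalent to $\deg(W) \cdot F$, so $c_{1}(V)^{2} = 0$ and $\Delta(V) = 0$. By \cite[Theorem 1.3]{br06}, already invoked in this paper, Higgs-semistability of $(V, d\pi(\phi))$ with respect to any one polarization implies Higgs-semistability with respect to every other polarization; hence it suffices to exhibit a single polarization that works. I would take $H_{m} := H + mF$, which is ample for all $m \geq 0$, and note that $\mu_{H_{m}}(V) = \deg(W)(F \cdot H)/r$ is independent of $m$.

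For a Higgs-invariant saturated subsheaf $V' \subset V$ of rank $r'$, restrict to the generic fiber $X_{K}$: here $V_{K} \cong \mathcal{O}_{X_{K}}^{r}$ is trivial on the elliptic curve over $K := k(C)$, so $c_{1}(V') \cdot F = \deg(V'_{K}) \leq 0$ (subsheaves of trivial bundles on an elliptic curve have non-positive degree). If the inequality is strict, then $\mu_{H_{m}}(V') \to -\infty$ as $m \to \infty$, so such $V'$ cannot destabilize for $m$ large. Otherwise $V'_{K}$ is a saturated, degree-zero subsheaf of $\mathcal{O}_{X_{K}}^{r}$; Atiyah's classification, together with the observation that $I_{m}$ for $m \geq 2$ admits no injection into $\mathcal{O}^{r}$, forces $V'_{K} = W_{\eta}^{0} \otimes_{K} \mathcal{O}_{X_{K}}$ for some $\phi_{\eta}$-invariant $K$-subspace $W_{\eta}^{0} \subset W_{\eta}$. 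Setting $W' := \pi_{*}V' \subset W$, the projection formula and Lemma \ref{lem:forms} imply that $W'$ is $\phi$-invariant, and a saturation argument in the vertical directions yields $V' = \pi^{*}W'$. The semistability of $(W, \phi)$ then gives
\[
\mu_{H_{m}}(V') \,=\, \frac{\deg(W')(F \cdot H)}{rk(W')} \,\leq\, \frac{\deg(W)(F \cdot H)}{r} \,=\, \mu_{H_{m}}(V).
\]

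Combining the two cases, $(V, d\pi(\phi))$ is $H_{m}$-semistable for $m \gg 0$, and hence $H$-semistable for every polarization by the discriminant-vanishing invariance. The main obstacle is the uniformity of $m$ in the first case: for each individual destabilizing $V'$ with $c_{1}(V') \cdot F < 0$ one readily finds a threshold $m_{0}(V')$ beyond which it no longer destabilizes, but producing a single threshold valid across \emph{all} such potential destabilizers requires a boundedness input — for instance, invoking the Higgs Bogomolov inequality $\Delta(V') \leq 0$ on a putative Higgs-semistable destabilizer of $V$ to bound $c_{1}(V') \cdot H$ in terms of $c_{1}(V') \cdot F$ uniformly.
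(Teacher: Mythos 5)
Your proposal is essentially the paper's own argument: use $\Delta(V)=0$ together with \cite[Theorem 1.3]{br06} to reduce to exhibiting one good polarization, pass to $H+mF$ with $m\gg 0$, discard Higgs-invariant subsheaves with $c_1\cdot F<0$, and show that those with $c_1\cdot F=0$ are pull-backs $\pi^{*}(W')$ of $\phi$-invariant subsheaves of $W$ (via triviality on $X_K$ and the vertical/pull-back lemmas), so that semistability of $(W,\phi)$ rules them out. The uniformity-of-$m$ issue you flag is left implicit in the paper as well and needs no Bogomolov input: for saturated subsheaves $N\subset V$ one has the standard bound $c_1(N)\cdot H\le \mathrm{rk}(N)\,\mu_{\max,H}(V)$, so any $N$ with $c_1(N)\cdot F\le -1$ has $\mu_{H+mF}(N)\to-\infty$ uniformly, giving a single threshold for $m$.
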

\begin{proof}
Since $\Delta(V) = 0$, it is enough to prove that there exists
a polarization with respect to which  $(V,d\pi(\phi))$ is semistable.
Assume the contrary and let $H$ be a polarization for which the 
pair $(V,d\pi(\phi))$ is unstable. Since the bundle $V_K$ is 
trivial and hence semistable, for any sub-sheaf of $N \subset V$,
we have $c_1(N).F \leq 0$. Hence, changing the polarization from
$H$ to $H+ mF$ for $m>>0$, either turns $(V,d\phi(\phi))$
into a semistable Higgs bundle in which case we are 
done or else the maximal destabilizing sub-sheaf $V_{max}$
satisfies  $c_1(V_{max}).F = 0$. But as $V_K$ is trivial
and $(V_{max})_K$ is a degree $0$ sub-bundle of $V_K$,
the only possibility is $(V_{max})_K$ is itself trivial.
Hence so do $(V/V_{max})_K$. Now from Lemma~\ref{lem:subvert}
and Lemma~\ref{lem:pullback} we have $V_{max} \cong \pi^{*}(\pi_*(V_{max}))$.
Hence, $\pi_*(V_{max})$ is a sub-bundle of $W$ of rank same as that of $V_{max}$.
Further, we have 
\[
\mu(\pi_*(V_{max})) = \frac {c_1(V_{max}).H/F.H}{rk(V_{max})} > \mu(W) = \frac {c_1(W).H/F.H}{rk(W)}.
\]
and $\pi_*(V_{max})$ is invariant under $\phi$, which contradicts semistability of $(W,\phi)$.
Hence, $(V,d\pi(\phi))$ is semistable for the polarization $H$.
\end{proof}
Thus, we have a well defined functor 
\[
\pi^{*} : \mathcal{C}^{Higgs}_C \rightarrow \mathcal{C}^{vHiggs}_X
\]
given by
\[
(W,\phi) \mapsto (\pi^{*}(W),d\pi(\phi)).
\]
From Lemma~\ref{lem:forms} we have the natural map $K_C \rightarrow \pi_*(\Omega^1_X)$
is an isomorphism. Hence, if $V = \pi^{*}(W)$ for $W$ a bundle on $C$, then
from projection formula every Higgs field $\theta$ on $V$ is of the form
$d \pi(\phi)$ for $\phi$ a Higgs field on $W$. Hence, the functor $\pi^{*}$
is full and  faithful. We will see below that  $\pi^{*}$
is essentially surjective as well and hence is an equivalence of categories,
which proves Theorem~\ref{thm:main} when $\pi$ has no multiple fibers.
\begin{rem}
The statement for line bundles (even without the assumption
of non-isotriviality) is a consequence of Hodge theory for
complex surfaces. Recall we have under the assumption of $\chi(X) > 0$,
\[
g(C) = h^{1,0} = dim_{\mathbb{C}}(H^{1}(X,\mathcal{O}_X)) = dim_{\mathbb{C}}(H^{0}(X,\Omega^1_X)) = h^{0,1}.
\]
On the other hand, the dimension of the subspace
$H^{0}(X,\pi^{*}(K_C)) \subseteq H^{0}(X,\Omega^{1}_X)$ is $g(C)$ as well.
Hence, we have the equality 
\[
H^{0}(X,\pi^{*}(K_C)) = H^{0}(X,\Omega^1_X).
\]
In particular, every $1$-form on $X$ is the pull back of a form on $C$.
Now a rank $1$ Higgs bundle of the form in the theorem above is
a pair $(L,\theta)$ where $L$ is isomorphic to a line bundle of the form $\mathcal{O}_X(D)$
with $D$ vertical (hence in the case of no multiple fibers, $D$
is the pull back of a divisor on $C$) and $\theta$ is a $1$-form.
So the statement holds true for rank $1$ Higgs bundles as in the
theorem.
\end{rem}
Since we have assumed $X$ to be non-isotrivial, we have from Lemma
~\ref{lem:forms}, $\pi_*(\Omega^1_X)$ is the line bundle $K_C$
on $C$. Hence from semicontinuity principle, we have
\[dim_K(H^{0}(X_K,(\Omega^1_X)_K))= 1. \]
Consider now the restriction of
the short exact sequence
\[
0 \rightarrow \pi^{*}(K_C) \rightarrow \Omega^1_X \rightarrow \Omega^1_{X/C} \rightarrow 0.
\]
to $X_K$. Since $(\pi^{*}(K_C))_K \cong \mathcal{O}_{X_K} \cong (\Omega^1_{X/C})_K$,
we see that $(\Omega^1_X)_K$ is an extension of $\mathcal{O}_{X_K}$ by
$\mathcal{O}_{X_K}$. Up to isomorphism, there are only $2$ such bundles on
$X_K$, the one being the trivial rank $2$ bundle and the other the 
indecomposable bundle $I_2$ (see \cite{atiyah}). Since we have seen already that 
$dim_K(H^{0}(X_K,(\Omega_{X})_K)) = 1$, the bundle $(\Omega^1_{X})_K$
cannot be the trivial bundle and hence it is isomorphic to $I_2$.
So the pair $(V_K,\phi_K)$ is a $I_2$-valued Higgs pair on $X_K$.
Such an $I_2$ valued Higgs pair is equivalent to a morphism
\[
 I_2^{*} \rightarrow End(V,V)
\]
such that fiber wise the image lands inside a family of commuting matrices.
The following Lemma about $I_2$-valued Higgs pairs is what we need
for our purposes
\begin{lem}\label{lem:generic}
Let $E$ be an elliptic curve over a field $k$, and $\phi : V \rightarrow V \otimes I_2$
be an $I_2$-valued Higgs field. We then have, for any section 
$\alpha \in H^{0}(E,End(I_2,\mathcal{O}_E))$, the induced element
$\beta = \alpha \circ \phi \in H^{0}(E,End(V,V))$ is Nilpotent.
\end{lem}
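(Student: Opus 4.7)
My plan is to prove the stronger statement that $\mathrm{tr}(\beta^{m})=0$ in $H^{0}(E,\mathcal{O}_{E})=k$ for every $m\ge 1$; Newton's identities (in characteristic zero) together with Cayley--Hamilton then give $\beta^{n}=0$ for $n=\mathrm{rk}(V)$. To set up, observe first that $H^{0}(E,\mathcal{H}om(I_{2},\mathcal{O}_{E}))=H^{0}(E,I_{2}^{\vee})$ is one-dimensional, spanned by the canonical quotient $\alpha_{0}:I_{2}\twoheadrightarrow \mathcal{O}_{E}$, whose kernel is the sub-line bundle $s_{0}:\mathcal{O}_{E}\hookrightarrow I_{2}$ (so in particular $\alpha_{0}\circ s_{0}=0$). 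Every $\alpha$ is a scalar multiple of $\alpha_{0}$, so it suffices to treat $\alpha=\alpha_{0}$; set $\beta_{0}:=\alpha_{0}\circ\phi$.

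Next, the Higgs integrability $\phi\wedge\phi=0$ ensures that the iterate
\[
\phi^{(m)}\;:=\;(\phi\otimes \mathrm{id}_{I_{2}}^{\otimes(m-1)})\circ\cdots\circ (\phi\otimes \mathrm{id}_{I_{2}})\circ\phi\;:\;V\longrightarrow V\otimes I_{2}^{\otimes m}
\]
factors through $V\otimes \mathrm{Sym}^{m} I_{2}$: in a local trivialisation $\phi=\phi_{1}\otimes e_{1}+\phi_{2}\otimes e_{2}$, the condition reduces to $[\phi_{1},\phi_{2}]=0$, making $\phi^{(m)}$ symmetric in its $I_{2}$-factors. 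Composition with $\mathrm{id}_{V}\otimes \alpha_{0}^{\otimes m}$ recovers $\beta_{0}^{m}$, and taking traces gives
\[
\mathrm{tr}(\beta_{0}^{m})\;=\;\bigl\langle\,\alpha_{0}^{\otimes m},\,\mathrm{tr}(\phi^{(m)})\,\bigr\rangle,
\]
where $\mathrm{tr}(\phi^{(m)})\in H^{0}(E,\mathrm{Sym}^{m}I_{2})$ and $\alpha_{0}^{\otimes m}\in H^{0}(E,(\mathrm{Sym}^{m}I_{2})^{\vee})$ are paired by the evaluation map to $H^{0}(E,\mathcal{O}_{E})=k$.

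The heart of the argument is that this pairing vanishes for every $m\ge 1$. By Atiyah's classification, $\mathrm{Sym}^{m}I_{2}\cong I_{m+1}$, the unique indecomposable rank-$(m+1)$ bundle of trivial determinant on $E$, and hence both $H^{0}(E,\mathrm{Sym}^{m}I_{2})$ and its dual $H^{0}(E,(\mathrm{Sym}^{m}I_{2})^{\vee})$ are one-dimensional. The generator of the former is $\mathrm{Sym}^{m}(s_{0}):\mathcal{O}_{E}\hookrightarrow \mathrm{Sym}^{m}I_{2}$, sitting in the bottom of the natural $\mathcal{O}_{E}$-filtration of $\mathrm{Sym}^{m}I_{2}$, while $\alpha_{0}^{\otimes m}$ is (up to scalar) the projection onto the top quotient $\mathrm{Sym}^{m}I_{2}\twoheadrightarrow \mathrm{Sym}^{m}(I_{2}/s_{0}(\mathcal{O}_{E}))\cong \mathcal{O}_{E}$. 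Their composition equals $\mathrm{Sym}^{m}(\alpha_{0}\circ s_{0})=0$, so $\mathrm{tr}(\beta_{0}^{m})=0$ for every $m\ge 1$, and $\beta_{0}$ is nilpotent.

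The main obstacle I expect is the cohomological step just above: verifying that the unique global section of $\mathrm{Sym}^{m}I_{2}\cong I_{m+1}$ factors through the bottom sub-$\mathcal{O}_{E}$ and that $\alpha_{0}^{\otimes m}$ projects onto the top quotient. Both are structural features of Atiyah's indecomposable bundles on an elliptic curve, provable by induction on $m$ via the short exact sequence $0\to \mathcal{O}_{E}\to I_{m+1}\to I_{m}\to 0$ and its long exact cohomology sequence; the base case $m=1$ is the original defining extension and supplies the vanishing $\alpha_{0}\circ s_{0}=0$ used throughout.
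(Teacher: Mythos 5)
Your proof is correct, but it takes a genuinely different route from the paper's. The paper only establishes $\mathrm{Tr}(\beta)=0$ --- essentially your case $m=1$, using that $H^{0}(E,I_2)$ and $H^{0}(E,\mathcal{H}om(I_2,\mathcal{O}_E))$ are each one-dimensional and the composite of their generators vanishes --- and then finishes by a spectral argument: pass to a finite extension $L/k$, decompose $V_L$ into generalized eigenspaces of $\beta$, use the commutativity coming from integrability to see each eigenspace is preserved by the induced $I_2$-valued field, and run the same trace computation blockwise to get $\mathrm{rank}(V^{\delta_j}_L)\,\delta_j=0$, so every eigenvalue is zero. You instead kill all power sums at once: integrability makes the $m$-fold iterate factor through $V\otimes\mathrm{Sym}^m I_2$, the trace of this iterate is a global section of $\mathrm{Sym}^m I_2\cong I_{m+1}$, whose one-dimensional space of sections is spanned by the $m$-th power of the canonical inclusion $s_0$ and is therefore annihilated by $\alpha_0^{\otimes m}$ since $\alpha_0\circ s_0=0$; Newton's identities plus Cayley--Hamilton conclude. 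Your route avoids the base-field extension and eigenspace bookkeeping, at the price of needing $h^{0}(\mathrm{Sym}^m I_2)=1$ for every $m$ (Atiyah's $\mathrm{Sym}^m I_2\cong I_{m+1}$ in characteristic zero; over the non-closed field $K=k(C)$ this descends by flat base change, since nonsplitness of $I_2$ persists) and an explicit characteristic-zero hypothesis for Newton's identities --- harmless here, and the paper's own step $\mathrm{rank}\cdot\delta_j=0\Rightarrow\delta_j=0$ quietly uses the same assumption. Note also that you do not really need to identify $\alpha_0^{\otimes m}$ with the top projection: once $h^{0}(\mathrm{Sym}^m I_2)=1$ and $s_0^{m}\neq 0$, every global section is a multiple of $s_0^{m}$ and the pairing vanishes. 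The one point to write out carefully is the claim you flag as the main obstacle, namely that the unique section of $I_{m+1}$ lies at the bottom of the filtration; your induction via $0\to\mathcal{O}_E\to I_{m+1}\to I_m\to 0$ works provided you check that the generator of $H^{0}(I_m)$ does not lift, i.e.\ the connecting map to $H^{1}(\mathcal{O}_E)$ is nonzero, which follows in the standard way from indecomposability.
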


\begin{proof}
The bundle $I_2$ is an extension of $\mathcal{O}_E$ by $\mathcal{O}_E$
and hence we have a short exact sequence
\[
0\rightarrow \mathcal{O}_E \stackrel {s} {\rightarrow} I_2 \stackrel {t}{\rightarrow} \mathcal{O}_E 
\rightarrow 0.
\]
Further 
\[
H^{0}(E,I_2) = k<s>, \,\ H^{0}(E,End(I_2,\mathcal{O}_E)) = k<t>.
\]
In particular, for $a \in H^{0}(E,I_2)$ and $b \in H^{0}(E,End(I_2,\mathcal{O}_E))$, we always
have 
\[ba = 0 \in H^{0}(E,\mathcal{O}_E).\] 
We also have 
\[
I_2 \cong I_2^{*}
\]
Fix an isomorphism as above and then we have 
\[
H^{0}(E,I_2^{*}) = k<t^{*}> , \,\ H^{0}(E, End(I_2^{*},\mathcal{O}_E)) = k<s^{*}> .
\]
Consider now the morphism (which we denote by $\theta$ as well)
induced by the Higgs field
\[
\theta : I_2^{*} \rightarrow End(V,V).
\]
We have a trace map $Tr_V : End(V,V) \rightarrow \mathcal{O}_X$
and $Tr_V \circ \theta \in H^{0}(E,End(I_2^{*},\mathcal{O}_E))$. 
Let $Tr_V \circ \theta  = \lambda s^{*}$ and $\alpha = \gamma t^{*}$.
Then 
\[
Tr(\beta) = Tr_V \circ \theta \circ \alpha  = \lambda \gamma s^{*} t^{*} = 0.
\]
Let $L/k$ be a finite extension so that we have a decomposition
of $V_L$ as direct sum of generalized eigenspaces of $\beta$,
\[
V_L = \bigoplus _ {\delta_j \in L} V^{\delta_j}_L
\]
Since $\theta$ point wise lands in a family of commuting endomorphisms,
we have $V_L^{\delta_j}$ are preserved by $\theta$. Hence, we have induced maps
\[
\theta^{\delta_j}: I_2^{*} \rightarrow End(V^{\delta_j}_L,V^{\delta_j}_L).
\]
and $\beta^{\delta_j} = \theta^{\delta_j} \circ \alpha$. In particular
\[
\beta = \oplus \beta^{\delta_j}.
\]
Now as in the case of $V$, we get 
\[rank(V^{\delta_j}_L) \delta_j = Tr(\beta^{\delta_j}) = 0.\]
Hence, either $\beta = 0$ or all the eigenvalues are $0$ and hence $\beta$
is nilpotent.
\end{proof}
As a consequence of the above Lemma we have the following
\begin{lem}
\label{lem:main}
Let $(V,\theta)$ be an $I_2$-valued Higgs pair, with $V$ a semistable rank $r$
degree $0$-bundle on $E$. Then either , \\
(a)\,\ $V = L \otimes \mathcal{O}_E^{\oplus r}$ with $deg(L) = 0$, and 
$\theta \circ (id_V \otimes t) = 0$, or \\
(b)\,\ $\exists W \subset E$ with $deg(W) = 0$ and $\theta(W) \subset W \otimes I_2$.
\end{lem}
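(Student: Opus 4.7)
My plan is to dichotomize according to the global endomorphism $\beta := (\mathrm{id}_V \otimes t) \circ \theta \in \mathrm{End}(V)$, which is nilpotent by Lemma \ref{lem:generic} (applied to $\alpha = t$). When $\beta \neq 0$ I will produce the $\theta$-invariant degree-zero subbundle of (b); when $\beta = 0$ I will use Atiyah's classification to show either that $V$ has the product form of (a) or that a canonical subbundle supplies (b).

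First, suppose $\beta \neq 0$, and set $W := \ker(\beta)$. Since $V$ is semistable of slope $0$, every subsheaf has slope $\leq 0$; in the exact sequence $0 \to W \to V \to \mathrm{im}(\beta) \to 0$, both $W$ and $\mathrm{im}(\beta)$ are subsheaves of $V$ with slopes $\leq 0$ and degrees summing to $0$, forcing $\deg(W) = 0$. Comparing $W$ with its saturation shows $W$ is already a subbundle, and $W$ is proper and nonzero since $\beta \neq 0$ is nilpotent. To verify $\theta$-invariance I would trivialize $I_2$ on a cover $\{U_\alpha\}$ compatibly with the filtration $0 \to \mathcal{O}_E \xrightarrow{s} I_2 \xrightarrow{t} \mathcal{O}_E \to 0$, choosing a basis $(e_1^\alpha, e_2^\alpha)$ of $I_2|_{U_\alpha}$ with $e_2^\alpha = s|_{U_\alpha}$ global and $e_1^\alpha$ a local lift of the quotient generator, and write $\theta|_{U_\alpha} = \beta|_{U_\alpha} \otimes e_1^\alpha + \psi^\alpha \otimes e_2^\alpha$ for local endomorphisms $\psi^\alpha \in \mathrm{End}(V|_{U_\alpha})$. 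The fiberwise commutativity condition in $\mathrm{End}(V_x)$ furnished by the $I_2$-valued Higgs interpretation then gives $[\beta, \psi^\alpha] = 0$, so $\psi^\alpha$ preserves $\ker(\beta)|_{U_\alpha}$. For $v \in W|_{U_\alpha}$ this yields $\theta(v) = \psi^\alpha(v) \otimes e_2^\alpha \in W \otimes I_2|_{U_\alpha}$, and these glue to $\theta(W) \subset W \otimes I_2$, giving (b).

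Second, suppose $\beta = 0$. Then $\theta$ factors through $V \otimes s(\mathcal{O}_E) \subset V \otimes I_2$, so $\theta = (\mathrm{id}_V \otimes s) \circ \sigma$ for a unique $\sigma \in \mathrm{End}(V)$, and $\theta$-invariance of any subbundle reduces to $\sigma$-invariance. Writing $V \cong \bigoplus_j L_j \otimes I_{k_j}$ via Atiyah and grouping by isomorphism class of the $L_j$'s as $V = \bigoplus_{M \in S} V_M$, the vanishing $H^0(E, L \otimes I_k) = 0$ for any nontrivial $L \in \mathrm{Pic}^0(E)$ forces $\sigma$ to preserve each isotypic piece $V_M$. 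If $|S| \geq 2$, any single $V_M$ is a proper $\sigma$-invariant degree-zero subbundle, giving (b). Otherwise $V = M \otimes \bigoplus_j I_{k_j}$; if every $k_j = 1$ then $V \cong M^{\oplus r}$ and we are in (a) with $L = M$. If some $k_j \geq 2$, then the canonical subbundle $V^{\mathrm{can}} := \bigoplus_j M \hookrightarrow V$ (via the canonical inclusion $\mathcal{O}_E \hookrightarrow I_{k_j}$ generating the one-dimensional space $H^0(I_{k_j})$) is a proper subbundle of degree $0$ preserved by every $\sigma \in \mathrm{End}(V)$, since each homomorphism $M \to M \otimes I_{k_j}$ factors through this canonical inclusion; this gives (b).

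The hard part, I expect, is the verification of $\theta$-invariance of $\ker(\beta)$ in the first case: the nontrivial extension class of $I_2$ prevents the local components $\psi^\alpha$ from assembling into a global endomorphism of $V$, but the fiberwise commutativity hypothesis becomes exactly strong enough on the globally defined subsheaf $\ker(\beta)$, where the transition ambiguity $\psi^\beta - \psi^\alpha = -c_{\alpha\beta}\,\beta$ vanishes identically.
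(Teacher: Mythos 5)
Your proof is correct and takes essentially the same route as the paper: the dichotomy on $\beta=(\mathrm{id}_V\otimes t)\circ\theta$, which is nilpotent by Lemma~\ref{lem:generic}, with $W=\ker\beta$ supplying case (b) when $\beta\neq 0$, and the factorization $\theta=(\mathrm{id}_V\otimes s)\circ\sigma$ plus Atiyah's classification handling $\beta=0$. The only difference is that you spell out two steps the paper leaves as brief assertions, namely the $\theta$-invariance of $\ker\beta$ (via the local decomposition and fiberwise commutativity) and the Atiyah step (isotypic pieces and canonical subbundles), and both verifications are sound.
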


\begin{proof}
Consider the endomorphism 
\[
T = \theta \circ t : V \rightarrow V
\]
We have from Lemma~\ref{lem:generic}
that $T$ is a nilpotent endomorphism. Let $W : = Ker(T)$.
Now as $V$ is semistable of degree $0$, we have
$deg(W) \leq 0$. On the other hand by same reasoning $deg(Im(T)) \leq 0$.
Hence, $deg(W)$ is forced to be $0$. So if $\phi \neq 0$, then 
$W$ is a proper degree $0$ sub-bundle invariant under $\theta$
and we are done. If $\theta  = 0$, then $\theta$ factors
through $s : \mathcal{O}_E \rightarrow I_2$, i.e we have an
endomorphism $\phi : V \rightarrow V$ such that
\[
\theta  = (id_V \otimes s) \circ (\phi).
\]
Using Atiyah's classification results on bundles on elliptic curves \cite{atiyah},
it is easy to see that unless $V = L \otimes \mathcal{O}_{E}^{\oplus r}$
where $deg(L) = 0$, $\phi$ always leaves invariant a proper
degree $0$ sub-bundle of $V$.
\end{proof}
We have now all the ingredients to prove Theorem ~\ref{thm:main} when the fibration
has no multiple fibers. 
We provide below $2$ different arguments,
the first one though works only in the case when the 
Higgs bundle has no sub-Higgs sheaves.
\subsubsection{Higgs bundles with no sub-Higgs sheaves}
\label{subsec:3a}
\begin{proof}

Consider the spectral cover $Y \subset T^{*}X$ associated to a Higgs bundle
$(V,\theta)$. Let rank of $V$ be $r$.
The fact that $(V,\theta)$ has no sub-Higgs sheaves is equivalent
to $Y$ being irreducible and the natural map $q: Y \rightarrow X$ is
a finite map, which restricted to the smooth locus $Y^{sm}$
of $Y$ is a ramified $r$-sheeted cover of $q(Y^{sm})$.
Further, we have $V = q_*(L)$ where $L$ is a rank $1$
torsion free sheaf on $Y$. Now think of the Higgs field $\theta$
as a morphism
\[
\theta : TX \rightarrow End(V,V).
\]
For $x \in X$, the image of the induced morphism of vector spaces
\[
\theta(x) : T_xX \rightarrow End(V_x,V_x)
\]
by integrability condition on $\theta$ lies inside a commuting family of endomorphism.
Hence, the matrices in the image of $\theta(x)$ can be simultaneously triangularized
and the eigenvalues correspond to linear maps $T_x X \rightarrow \mathbb{C}$
or equivalently elements of $T^{*}_x X$ which is precisely the set $q^{-1}(x) \subset Y$.
Though there might not exist global sections of $T^*X$ which restrict to the
eigenvalues point wise, we can find sections of suitable symmetric powers of $T^*X$
which correspond to the co-efficients of the characteristic polynomials.
The discriminants of the point wise characteristic polynomials can also
be extended to a section of a suitable symmetric power of $T^*X$.
Let us  call it $\Delta(\theta)$. Now as we have seen already $T^*X$
restricts to the unique indecomposable rank $2$ bundle of trivial
determinant when restricted to the smooth fibers. Further it has
a unique section which if non-zero is nowhere vanishing. Assume 
now $x \in X$ with fiber of $\pi$ over $y = \pi(x)$ smooth and
$\Delta(\theta)(x) = 0$. Then $\Delta(\theta)$ vanishes on the entire
fiber $\pi^{-1}(y)$. In particular, as the vanishing locus of
$\Delta(\theta)$ is a closed set, it has to be nowhere vanishing
on an open set $\pi^{-1}(U)$ where $U \subset C$ is open.
In particular we see that $q$ is unramified on $q^{-1}(U)$
and the ramification locus is a vertically supported Divisor on $X$.
Denote the scheme theoretic fiber of $f$ over $X_K$ by $Y_K$
which is a disjoint union of elliptic curves over $K$. On the other hand
the torsion free sheaf $L$ restricts to a line bundle $L_K$ on $Y_K$
and $V_K = (q_K)_*(L_K)$. If we denote by $G$ the Galois group
(note here we do not assume $Y_K$ to be connected, but the Galois
group makes sense), then we have 
\[
q_K^{*}(V_K) = \oplus_{\sigma \in G}\sigma(L_K) 
\]
Hence $\#(G)(deg(L_K) = \#(G)deg(V_K) = 0$. On the other hand
$H^{0}(Y_K,L_K) = H^{0}(X_K,V_K) \neq 0$ and hence the only possibility
is $L_K \cong \mathcal{O}_{Y_K}$. But then since $q_K$ is unramified
$(q_K)_*(\mathcal{O}_{Y_K}) = \oplus_{i=1}^m(\oplus_{j=1}^{n_i} K_i^j)$
where $K_i$ are torsion line bundles on $X_K$ defining a connected subcover
$q_K^i : Y_K^{i} \subset Y_K \rightarrow X_K$. But as $V_K$ is already
an extension by trivial line bundles, the only possibility is $K_i = \mathcal{O}_{X_K}$
for every $i$ and hence $Y_K$ is a disjoint union of copies of $X_K$ and
$V_K = \oplus_ {j=1}^r \mathcal{O}_{X_K}$.
\end{proof}

\subsubsection{The general case}
\label{subsec:3b}

\begin{proof}
Consider the $I_2$ Higgs pair $(V_K,\theta_K)$ on $X_K$. We have $V_K$ 
is an iterated extension of trivial line bundles. Now from Lemma
~\ref{lem:main} we have either $V_K$ is trivial or has a degree $0$
(hence semistable) sub-bundle $W_K \subset V_K$ preserved by $\theta_K$.
Clearly, $W_K$ is also an iterated extension by trivial line bundles
as $V_K$ is so. We can extend $W_K$ to a sub-sheaf $W$ of $V$
with torsion free quotient $V/W$ and $\theta(W) \subseteq W \otimes \Omega^1_X$.
Further $det(W).F = 0$. Every sub-sheaf $Q \subset V$ preserved by $\theta$
satisfies $det(Q).F \leq 0$ as $V_K$ is semistable of degree $0$.
Now changing polarization from $H$ to $H+mF$ for a suitable $m \in \mathbb{N}$,
we can assume the subsheaf $V_{max} \subset V$, which has maximum slope
among the sub-sheaves preserved by $\theta$ satisfies $det(V_{max}).F = 0$.
In particular $(V_{max})_K$ is also an iterated extension by trivial line bundles
and so do the quotient $(V/V_{max})_K$. Chose a filtration by trivial
line bundles on $(V_{max})_K$ and $(V/V_{max})_K$ and extend them to $X$
as filtration on $V_{max}$ and $(V/V_{max})$ where the sub-quotients
are rank $1$ torsion free sheaves of type $\mathcal{O}_X(D_i)\otimes I_{Z_i}$
with $D_i$ being vertically supported divisors for every index $i$. Now observe this
filtration inturn gives a filtration on $V$ and as in the proof of
Proposition~\ref{prop:vert} we can see that infact $Z_i = \emptyset$
and $D_i$ are vertical divisors. Hence both $V_{max}$ and $V/V_{max}$
are vertical bundles. 
Denote the induced Higgs fields on $V_{max}$ and $V/V_{max}$ by
$\theta_0$ and $\theta_1$ respectively.
From the assumption both of them are semistable
Higgs bundles on $X$ as well of rank smaller than that of $V$.
Hence by induction we have semistable Higgs bundles $(W_0,\phi_0)$
and $(W_1,\phi_1)$ on $C$ such that
\[
(V_{max},\theta_0) \cong (\pi^{*}(W_0),\pi^{*}(\phi_0)), \,\ (V/V_{max},\theta_1) \cong (\pi^{*}(W_1),\pi^{*}(\phi_1)).
\]
Note that we have
\[
deg(W_0) = det(V_{max}).H/F.H  \leq deg(W_1) = det(V/V_{max}).H/F.H.
\]
Now consider the short exact sequence (infact a short exact sequence of Higgs bundles on $X$)
\begin{equation}\label{e:6}
0 \rightarrow V_{max} \rightarrow V \rightarrow V/V_{max} \rightarrow 0.
\end{equation}
Applying $\pi_*$ to equation~\ref{e:6}, we get a long exact sequence
\[
0 \rightarrow W_0 \rightarrow \pi_*(V) \rightarrow W_1 \stackrel {\eta}{\rightarrow} W_1 \otimes L^{-1} 
\]
where $L = R^1\pi_*(\mathcal{O}_X)^{-1}$. Now recall since $X$ is relatively minimal
and $\chi(X) > 0$, we have $deg(L) > 0$. The map $\eta$ is compatible
with the Higgs fields $\phi_0$ and $\phi_1$ on $W_0$ and $W_1$ respectively.
But \[ deg(W_0) > deg(W_1 \otimes L^{-1}) \]
and hence as $(W_0,\phi_0)$ and $(W_1,\phi_1)$ are semistable as Higgs bundles
on $C$, the morphism $\eta = 0$. Hence 
\[
rk(\pi_*(V)) = rk(V) \implies V_K \cong \mathcal{O}_{X_K}^{\oplus rk(V)}.
\]
\end{proof}

\subsection{Proof of Theorem ~\ref{thm:main} in the case of multiple fibers}
Recall the diagram
\[
\begin{CD}
\widetilde{X} @>q>> X\\
@VV\tilde{\pi}V @VV \pi V\\
\widetilde{C} @>p>> C
\end{CD}
\]
where $\tilde{\pi} : \widetilde{X} \rightarrow \widetilde{C}$ is a 
non-isotrivial 
relatively minimal elliptic surface with no multiple fibers.
We have $\widetilde{C} \rightarrow C$ is Galois with Galois
group $\Gamma$. Further $\widetilde{X} \rightarrow X$ is etale Galois with
Galois group also $\Gamma$.
From the previous subsection, we have an equivalence of categories
\[
\tilde{\pi}^{*} : \mathcal{C}^{Higgs}_{\widetilde{C}} \rightarrow 
\mathcal{C}^{vHiggs}_{\widetilde{X}}.
\]
Since $\tilde{\pi}$ is $\Gamma$ equivariant, the functor $\tilde{\pi}^*$
induces a functor
\[
\tilde{\pi}_{\Gamma}^{*} : \mathcal{C}_{\widetilde{C}}^{\Gamma-Higgs} \rightarrow 
\mathcal{C}_{\widetilde{X}}^{\Gamma-vHiggs}
\] 
We claim now the functor $\tilde{\pi}_{\Gamma}^*$ is an equivalence of categories.
To that end note that every $\Gamma$-semistable Higgs bundle on $\widetilde{X}$
is  semistable in the usual sense. Hence every 
object $(V,\theta) \in Ob(\mathcal{C}^{\Gamma-Higgs}_{\widetilde{X}})$
is isomorphic to a Higgs bundle of the form 
$(\tilde{\pi}^{*}(W),d\tilde{\pi}(\phi))$
as $\tilde{\pi}^*$ is an equivalence of categories. The only thing to 
verify is if
this isomorphism can be obtained in the category of $\Gamma$-equivariant 
Higgs bundles
on $\widetilde{X}$. 
The idea is to show that for any chosen isomorphism of Higgs bundles
$\lambda$ in $Isom((\tilde{\pi}^{*}(W),d\tilde{\pi}(\phi)),(V,\theta))$ 
we can provide $(W,\phi)$ with a natural $\Gamma$ structure such that
the isomorphism $\lambda$ is $\Gamma$ invariant.\\
To see this denote 
by $\tau^{\widetilde{X}}_g$ and $\tau^{\widetilde{C}}_g$
the respective automorphisms of $\widetilde{X}$ and 
$\widetilde{C}$ corresponding to $g \in \Gamma$.
Recall from the definition of $\Gamma$-equivariance, we have isomorphisms
\[
 \alpha_g \in 
Isom((V,\theta),((\tau^{\widetilde{X}}_g)^*(V),(\tau^{\widetilde{X}}_g)^{*}(\theta))).
\] 
satisfying
\[
(\tau^{\widetilde{X}}_h)^*(\alpha_g)\alpha_h = \alpha_{gh}.
\]
Now we have
\[
\tilde{\pi} \circ \tau^{\widetilde{X}}_g = 
\tau^{\widetilde{C}}_g \circ \tilde{\pi}, \,\ \forall g \in \Gamma. 
\]
Fix an isomorphism 
\[
\lambda \in Isom((\tilde{\pi}^*(W),d\tilde{\pi}(\phi)), (V,\theta)) 
\]
Set
\[
\beta_g = (\tau^{\widetilde{X}}_g)^*(\lambda) \circ \alpha_g \circ \lambda. 
\]
We have then
\begin{align*}
\beta_g 
&\in Isom((\tilde{\pi}^*(W),d\tilde{\pi}(\phi)),(\tilde{\pi}^*(\tau^{\widetilde{C}}_g)^*W),d\tilde{\pi}(\tau^{\widetilde{C}}_g)^*\phi)))\\
&= Isom((W,\phi), ((\tau^{\widetilde{C}}_g)^*(W),(\tau^{\widetilde{C}}_g)^*(\phi))).
\end{align*}
and clearly
\[
(\tau^{\widetilde{C}}_h)^*(\beta_g)\beta_h = \beta_{gh}.
\]
Hence $\beta_g$ induce a $\Gamma$-structure on 
$(W,\phi)$ such that the isomorphisms $\lambda$
are $\Gamma$-equivariant.\\

Let $\Delta$ be a vertical divisor and $d = \frac {\Delta.H}{F.H}$. Denote by
${M}_X^{Higgs}(r,\Delta,0)$ the moduli space of S-equivalence classes of rank $r$ $H$-semistable Higgs bundles
on $X$ with vanishing second Chern class and determinant numerically equivalent to $\Delta$.\\
Denote by ${M}_{(C,\textbf{c},\textbf{m})}^{ParHiggs}(r,d)$ the moduli space of S-equivalence classes of parabolic
Higgs bundles on $C$ with parabolic structures above the points $c_i$ (and weights above $c_i$
lying in $\frac {\mathbb{Z}} {m_i\mathbb{Z}}$) and parabolic degree $d$. \\
We have the following Corollary of Theorem~\ref{thm:main}
\begin{coro}
The moduli spaces ${M}_X^{Higgs}(r,\Delta,0)$ and ${M}_{(C,\textbf{c},\textbf{m})}^{ParHiggs}(r,d)$
are isomorphic as algebraic varieties.
\label{coro:main}
\end{coro}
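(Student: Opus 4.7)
The plan is to upgrade the equivalence of categories from Theorem~\ref{thm:main} into an algebraic morphism between the moduli spaces, and then show this morphism is an isomorphism of schemes.

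First I would verify that the constructions of Section~\ref{secs:3} work in families. Given a scheme $T$ and a $T$-family $(\mathcal{W},\Phi)$ of parabolic Higgs bundles on $C$ with the prescribed weights, one applies the Galois cover construction to get an equivariant Higgs family on $\widetilde{C}\times T$, then pulls back by $\tilde{\pi}\times\mathrm{id}_T$ to obtain an equivariant family on $\widetilde{X}\times T$, and finally descends along the \'etale Galois cover $q\times\mathrm{id}_T$ to obtain a $T$-family $(\mathcal{V},\Theta)$ in $\mathcal{C}^{vHiggs}_X$. Since each ingredient (Galois descent, flat pullback, correspondence between parabolic and equivariant objects, and the functor $\tilde{\pi}^*$) is natural in the base, this defines a morphism of moduli functors
\[
F : \underline{M}_{(C,\textbf{c},\textbf{m})}^{ParHiggs}(r,d) \longrightarrow \underline{M}_X^{Higgs}(r,\Delta,0),
\]
and hence a morphism of the coarse moduli spaces.

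Second, bijectivity on closed points is immediate from Theorem~\ref{thm:main}: every $S$-equivalence class on the $X$-side is represented by the polystable object associated to some parabolic Higgs bundle on $C$, and the equivalence of categories sends polystable to polystable and preserves isomorphism classes. The corollary at the numerical level (rank $r$, $c_1=\Delta$, $c_2=0$ on one side, parabolic degree $d$ on the other) follows from the compatibility $\mathrm{deg}(W)=\Delta.H/F.H$ computed in the proof of the main theorem, which matches parabolic degree after passing through the cover $p:\widetilde{C}\to C$ and accounting for the contribution of the weights at the $c_i$.

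Third, to upgrade the bijection to a scheme isomorphism I would construct the inverse morphism. Given a $T$-family $(\mathcal{V},\Theta)$ on the $X$-side, pull back to $\widetilde{X}\times T$, take the relative direct image along $\tilde{\pi}\times\mathrm{id}_T$ (using Lemma~\ref{lem:forms} and the projection formula to see this is a family of Higgs bundles on $\widetilde{C}\times T$), and then apply the $\Gamma$-invariant pushforward $p^\Gamma_*$ to land in families of parabolic Higgs bundles on $C$. The content of Theorem~\ref{thm:main} is precisely that on geometric fibers these constructions invert each other, so the composition in either order agrees with the identity on closed points; since both moduli spaces are separated and the morphisms are constructed from functorial universal families, the two compositions are the identity morphisms.

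The main obstacle is the last step: promoting the fiberwise inversion into a genuine inverse morphism of schemes. The difficulty is that the moduli spaces are only coarse, so one cannot simply manipulate a universal Higgs bundle. The way around this is to work at the level of the moduli stack (or equivalently on a local GIT chart where a tautological family exists after passing to an atlas), construct the two morphisms on the atlas, and then descend. Equivalently, one can argue that $F$ is a bijective morphism between normal varieties which is also an isomorphism on tangent spaces at polystable points (both deformation complexes are transported faithfully by the equivalence of categories, since $\pi^*$ is fully faithful by Lemma~\ref{lem:forms}), whence $F$ is an isomorphism by Zariski's main theorem.
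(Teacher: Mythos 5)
Your core construction is the same as the paper's: work with flat families over a base $T$, reduce to the no-multiple-fiber case through the covers $p$ and $q$, and use $\pi_T^{*}$ and $(\pi_T)_{*}$ to pass back and forth between families on $X\times T$ and on $C\times T$, with Theorem~\ref{thm:main} (plus Lemma~\ref{lem:forms} and the projection formula) guaranteeing that these operations stay inside the relevant categories and invert each other. Where you diverge is in the final step, and there the paper is both simpler and safer. The paper observes that $\pi^{*}$ and $\pi_{*}$ are natural transformations of the moduli \emph{functors} whose compositions are the identity transformations; since a natural transformation from a moduli functor to the functor of points of a scheme factors uniquely through the coarse moduli space, a natural equivalence of functors immediately yields an isomorphism of the coarse spaces. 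No universal family, no atlas or stack, no Zariski main theorem argument is needed, and your worry that ``one cannot manipulate a universal Higgs bundle'' is beside the point: one never needs one. By contrast, your fallback route is the weak link of the proposal. The assertion that agreement of the two composites on closed points plus separatedness forces them to be the identity requires reducedness of the source, which you do not address; the phrase ``functorial universal families'' is not available on coarse spaces (as you yourself note earlier); and the ZMT variant is shakier still, since at strictly polystable points the tangent space of the GIT quotient is not the hypercohomology deformation space transported by the equivalence of categories, and normality of $M_X^{Higgs}(r,\Delta,0)$ is nowhere established in the paper. So: your morphism-level construction is correct and matches the paper, but you should replace the concluding machinery by the one-line appeal to the universal property of coarse moduli spaces applied to the mutually inverse natural transformations you already built.
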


\begin{proof}
As in the proof of Theorem~\ref{thm:main} it is enough to consider the case of a fibration
with no multiple fibers. We denote by 
$\mathcal{M}_X^{Higgs}(r,\Delta,0)$ ($\mathcal{M}_C^{Higgs}(r,d)$) the moduli functors whose corresponding
coarse moduli spaces are $M_X^{Higgs}(r,\Delta,0)$ ($M_C^{Higgs}(r,d)$
respectively).\\
Recall these moduli functors are from category of finite-type schemes over $\mathbb{C}$ to 
category of sets. For a given finite type scheme $T$, 
$\mathcal{M}^{Higgs}_X(r,\Delta,0)(T)$ ( $\mathcal{M}^{Higgs}_C(r,d)$)
is the set of equivalence classes of flat families of semistable Higgs bundles on $X$ 
of rank $r$, vanishing second Chern class and determinant numerically equivalent to $\Delta$
(semistable Higgs bundles on $C$ of rank $r$ and degree $d$ respectively)
parametrised by $T$.
Recall a family parametrised by $T$ corresponding to $\mathcal{M}_X^{Higgs}(r,\Delta,0)$, is
a pair $(\mathcal{V},\psi)$ where $\mathcal{V}$ is a sheaf on $X \times T$, flat over $T$
and $\psi \in Hom(\mathcal{V},\mathcal{V}\otimes_{\mathcal{O}_{X\times T}} pr_1^*(\Omega^1_X))$
where $pr_1$ denotes the projection map from $X \times T$ to $X$. Further for every closed
point $t \in T$, we have for the natural closed embedding $t: X \hookrightarrow X \times T$,
The pair 
\[
(V_t,\psi_t) : = (t^*\mathcal{V},t^*\psi)
\] 
is an object in $\mathcal{C}_X^{vHiggs}$ with $det(V_t) \equiv \Delta$.
Let $\pi_T$ denote the morphism 
\[\pi_T:= \pi \times id_T : X \times T \rightarrow C \times T.\]
From Theorem~\ref{thm:main}, we get the pair 
$(\mathcal{W},\phi):=((\pi_T)_*(\mathcal{V}),(\pi_T)_*(\psi))$
is a flat family of objects in $\mathcal{C}^{Higgs}_{C}$ with $deg(\mathcal{W}_t) = d$ for every $t$.
Thus we get a Natural transformation of functors
\[
\pi_* : \mathcal{M}_X^{Higgs}(r,\Delta,0) \rightarrow \mathcal{M}_C^{Higgs}(r,d)
\]
Conversely starting from a flat family $(\mathcal{W},\phi)$ of objects in $\mathcal{C}^{Higgs}_{C}$, 
with $deg(\mathcal{W}_t) = d$ for every $t$, parametrised by $T$, we have 
$((\pi)_T^*(\mathcal{W}),(\pi_T)^*(\phi))$ is a flat family of objects in $\mathcal{C}^{vHiggs}_X$ parametrised by $T$
as considered above.\\
Thus we get a natural transformation 
\[
\pi^* : \mathcal{M}_C^{Higgs}(r,d) \rightarrow \mathcal{M}_X^{Higgs}(r,\Delta,0)
\]
The composition $\pi_* \circ \pi^*$ and $\pi^* \circ \pi_*$ are clearly the identity 
transformations of the corresponding functors.\\
Hence the moduli functors $\mathcal{M}_X^{Higgs}(r,\Delta,0)$ and $\mathcal{M}_C^{Higgs}(r,d)$
are naturally equivalent and so 
the corresponding coarse moduli spaces $M_X^{Higgs}(r,\Delta,0)$ and $M_C^{Higgs}(r,d)$
are isomorphic as varieties.
\end{proof}

\bibliographystyle{amsplain}
\bibliography{p1}

\providecommand{\bysame}{\leavevmode\hbox to3em{\hrulefill}\thinspace}
\providecommand{\MR}{\relax\ifhmode\unskip\space\fi MR }
\providecommand{\MRhref}[2]{%
  \href{http://www.ams.org/mathscinet-getitem?mr=#1}{#2}
}
\providecommand{\href}[2]{#2}
\begin{thebibliography}{10}

\bibitem{atiyah}
Michael~Francis Atiyah, \emph{Vector bundles over an elliptic curve}, Proc.
  London Math. Soc \textbf{7} (1957), no.~3, 415--452.

\bibitem{ba91}
Stefan Bauer, \emph{{P}arabolic bundles, elliptic surfaces and
  ${SU}(2)$-representation spaces of genus zero {F}uchsian groups},
  Mathematische Annalen \textbf{290} (1991), no.~1, 509--526.

\bibitem{biswas1}
Indranil Biswas, \emph{Orbifold principal bundles on an elliptic fibration and
  parabolic principal bundles on a {R}iemann surface}, Collectanea Mathematica
  \textbf{54} (2003), no.~3, 293--308.

\bibitem{biswas2}
\bysame, \emph{Orbifold principal bundles on an elliptic fibration and
  parabolic principal bundles on a {R}iemann surface, {II}}, Collectanea
  Mathematica \textbf{56} (2005), no.~3, 235--252.

\bibitem{br06}
Ugo Bruzzo and Daniel Hern{\'a}ndez~Ruip{\'e}rez, \emph{Semistability vs.
  nefness for ({H}iggs) vector bundles}, Differential Geometry and its
  Applications \textbf{24} (2006), no.~4, 403--416.

\bibitem{donaldson}
Simon~K Donaldson, \emph{Anti self-dual yang-mills connections over complex
  algebraic surfaces and stable vector bundles}, Proceedings of the London
  Mathematical Society \textbf{50} (1985), no.~1, 1--26.

\bibitem{fr98}
Robert Friedman, \emph{Algebraic surfaces and holomorphic vector bundles},
  Springer, 1998.

\bibitem{fr94}
Robert Friedman and John~W Morgan, \emph{Smooth four-manifolds and complex
  surfaces}, vol.~27, Springer, 1994.

\bibitem{gantz}
Christian Gantz and Brian Steer, \emph{Stable parabolic bundles over elliptic
  surfaces and over {R}iemann surfaces}, Canadian Mathematical Bulletin
  \textbf{43} (2000), no.~2, 174--182.

\bibitem{garcia}
Oscar Garcia-Prada, Marina Logares, and Vicente Mu{\~n}oz, \emph{Moduli spaces
  of parabolic ${U}(p,q)$-{H}iggs bundles}, The Quarterly Journal of
  Mathematics \textbf{60} (2009), no.~2, 183--233.

\bibitem{liu00}
Qing Liu and Takeshi Saito, \emph{{I}nequality for conductor and differentials
  of a curve over a local field}, Journal of Algebraic Geometry \textbf{9}
  (2000), 409--424.

\bibitem{chechu3}
VB~Mehta and CS~Seshadri, \emph{Moduli of vector bundles on curves with
  parabolic structures}, Mathematische Annalen \textbf{248} (1980), no.~3,
  205--239.

\bibitem{chechu1}
Mudumbai~Seshachalu Narasimhan and Conjeevaram~S Seshadri, \emph{Stable and
  unitary vector bundles on a compact {R}iemann surface}, Annals of Mathematics
  (1965), 540--567.

\bibitem{121880}
rvarma (http://mathoverflow.net/users/25576/rvarma), \emph{sections of the
  cotangent bundle of elliptic surfaces}, MathOverflow,
  URL:http://mathoverflow.net/q/121880 (version: 2013-02-16).

\bibitem{meta}
Peter Scheinost and Martin Schottenloher, \emph{Metaplectic quantization of the
  moduli spaces of flat and parabolic bundles}, J. reine angew. Math
  \textbf{466} (1995), 145--219.

\bibitem{chechu2}
CS~Seshadri, \emph{Space of unitary vector bundles on a compact {R}iemann
  surface}, Annals of Mathematics (1967), 303--336.

\bibitem{simpI}
Carlos~T Simpson, \emph{Moduli of representations of the fundamental group of a
  smooth projective variety {I}}, Publications Math{\'e}matiques de l'IH{\'E}S
  \textbf{79} (1994), no.~1, 47--129.

\bibitem{simpII}
\bysame, \emph{Moduli of representations of the fundamental group of a smooth
  projective variety {II}}, Publications Math{\'e}matiques de l'IH{\'E}S
  \textbf{80} (1994), no.~1, 5--79.

\end{thebibliography}

\end{document}